\documentclass[bj,noshowframe]{imsart}

\usepackage{amsfonts}
\usepackage{amsmath,bm}
\usepackage{amssymb,mathrsfs,nicefrac,graphicx}
\usepackage[mathscr]{eucal} 
\usepackage{amsthm,nicefrac,bigints}
\usepackage[utf8]{inputenc}
\usepackage{epsfig}
\usepackage{multirow}
\usepackage{eurosym}
\usepackage{enumerate}
\usepackage{enumitem}
\usepackage{placeins}

\usepackage{bbm,bm}

\startlocaldefs

\newtheorem{theorem}{Theorem} 
\newtheorem{proposition}[theorem]{Proposition}
\newtheorem{lemma}[theorem]{Lemma}

\theoremstyle{remark}\newtheorem{remark}[theorem]{Remark}
\theoremstyle{definition}
\theoremstyle{definition}

\newcommand{\bN}{\mathbb{N}}
\newcommand{\bP}{\mathbb{P}}

\newcommand{\bR}{\mathbb{R}}

\newcommand{\bH}{\mathbb{H}}

\newcommand{\bS}{\mathbb{S}}

\newcommand{\cL}{\mathcal{L}}

\newcommand{\ra}{\text{\rm ra}}

\newcommand{\var}{\text{\rm var}}
\newcommand{\cov}{\text{\rm cov}}

\newcommand{\Exp}{\text{\rm Exp}}

\newcommand{\todistr}{\to_{\text{\rm\tiny distr}}}
\newcommand{\toweak}{\to_{\text{\rm\tiny w}}}
\newcommand{\eqdistr}{=_{\text{\rm\tiny distr}}}

\newcommand{\gf}{^{\text{\rm\tiny GF}}}
\newcommand{\ts}{^{\text{\rm\tiny TS}}}
\newcommand{\sts}{^{\text{\rm\tiny *TS}}}
\newcommand{\sy}{^{\text{\rm\tiny SY}}}
\newcommand{\tr}{^{\text{\rm\tiny T}}}

\newcommand{\pit}{^{\text{\hspace*{0.2mm}\rm\tiny P}}}

\newcommand{\ba}{^{\text{\hspace*{0.2mm}\rm\tiny B}}}

\newcommand{\sle}{\scalebox{0.5}{$\boldsymbol{\le}$}}

\allowdisplaybreaks

\newcommand{\abs}[1]{\lvert#1 \rvert}
\newcommand{\nf}{\nicefrac}

\newcommand{\mc}[2]{\begin{tabular}{@{}c@{}}  {#1} \\  {#2} \end{tabular}} 

\begin{document}

\begin{frontmatter}

\title{Pattern-based tests for two-dimensional copulas}

\runtitle{Tests for copulas}

\begin{aug}

\author{\fnms{Ludwig} \snm{Baringhaus}\ead[label=e1]{lbaring@stochastik.uni-hannover.de}}
\author{\fnms{Rudolf} \snm{Gr{\"u}bel}\ead[label=e2]{rgrubel@stochastik.uni-hannover.de}}
\address[]{ Leibniz Universit\"at Hannover, Institut f{\"u}r Versicherungs- und Finanzmathematik, Postfach 6009, D-30060
  Hannover, Germany\\ \printead{e1,e2}}

\runauthor{L. Baringhaus and R. Gr{\"u}bel}

\end{aug}

\begin{abstract}  
In statistics permutations typically arise in the context of
rank plots for two-dimensional data. Such plots can also be interpreted as
discrete copulas.  In discrete mathematics, typically in the context of 
the description of large (non-random) objects, 
two-dimensional copulas appear as limits
of  permutations and are then known as permutons if the topology refers
to the convergence of pattern frequencies. 
We obtain a functional central limit theorem for such pattern 
frequencies in the context of two-dimensional random samples.  
The result serves as the basis for nonparametric goodness-of-fit tests,
for two-sample tests, and for tests of symmetry. This includes a suitable
variant of the bootstrap for obtaining critical values. 
Pattern-based procedures
are also of interest in a parametric context. We consider two examples,  
the Farlie-Gumbel-Morgenstern class and a family of
delay copulas. We discuss implementation aspects of the resulting procedures and we
provide a simulation study that supplements the theoretical results in the 
nonparametric case.
\end{abstract}

\begin{keyword}
  \kwd{Bootstrap}
  \kwd{copula}
  \kwd{Cram\'er-von Mises type test}
  \kwd{goodness-of-fit test} 
  \kwd{Kolmogorov-Smirnov type test}
  \kwd{permuton}
  \kwd{random permutation}
  \kwd{rank plot}
  \kwd{two-sample test}
\end{keyword}

\end{frontmatter}

\maketitle

\section{Introduction}\label{sec:intro}
Given a two-dimensional data set without ties in the separate sets of 
$x$- and $y$-coordinates there 
is a unique permutation $\pi$ that connects the ranks of the $x$- and
$y$-parts, and the associated rank plot may be seen as the plot of $\pi$.
Patterns in $\pi$ are similarly connected to subsets of the data set. Suppose that the
data arise from a sample $Z_i=(X_i,Y_i)$, $i=1,\ldots,n$, of two-dimensional random 
variables from some distribution $\mu$. If the marginal distribution functions associated 
with $\mu$ are continuous  then the no-ties condition
is satisfied with probability~1, and $\pi$ is the value of the associated random 
permutation $\Pi_n$. We are concerned with the use of pattern counts and their 
linear combinations, which we will refer to as \emph{linear pattern statistics},
as the basis for inference on $\mu$. If such a  statistic $L$ uses patterns of length at most $k$
then we call $k$ the \emph{degree} of $L$.

Kendall's famous nonparametric test for independence of the marginal variables 
is of this type, with degree two. As there are only two patterns of this
length its test statistic $\tau$ can be written as a function of the number of concordant pairs, where
an increase in the $x$-values corresponds to an increase in the $y$-values.
This is  the number of 12-patterns in the permutation, with 12 short 
for the identity permutation of size two. Longer patterns  were already considered
by Hoeffding~\cite{Hoeffding} who showed that there is  a linear 
pattern statistic of degree five  which leads to a test that detects dependence asymptotically 
whenever the underlying distribution has continuous joint and marginal densities.
Yanagimoto~\cite{Yana} proved that this consistency can already be obtained with a linear
pattern statistic of degree four, and  under the weaker condition that $\mu$ has a continuous 
distribution function. Yanagimoto's result has later been rediscovered by Bergsma and Dassios~\cite{BergsmaDassios}, who also considered discrete distributions. The resulting 
Bergsma-Dassios-Yanagimoto (BDY) test has received attention not only in statistics
but also in computer science and discrete mathematics; 
see~\cite{DrtonHanShi,EZLeng,Chan} and the references  given there.

Independence can be translated into the hypothesis that $\mu$ is the product of its marginals
$\mu_X$ and $\mu_Y$.
Naturally the question arises whether linear pattern statistics can similarly be used
in other situations, for example for tests of symmetry, with the hypothesis that $\mu$
is invariant under permutations of the $x$- and $y$-coordinates.  In view of the no-ties
condition  both hypotheses are  invariant under quantile transformations of
the coordinates, which provides a connection to copulas:  
The points of the rank plot for a data set of size $n$ are a set of pairs of integer values 
$(i,j)$ with the property that each $i$ and each $j$, $1\le i,j\le n$, appears exactly
once. The discrete uniform distribution on these pairs therefore has discrete
uniform marginals, and may be interpreted as a discrete copula, or as an empirical
copula  $C_n$ if rescaled by $1/n$, and  the convergence of $C_n$ to the copula $C$ 
associated with $\mu$ is of interest. A standard approach regards $C_n$ and $C$  as functions
on the unit square, and  related  Glivenko-Cantelli type 
results and functional central limit theorems have been investigated by several authors;
see e.g.~\cite{GaensslerStute} and \cite{VaartWellner}. If the empirical process associated
with the original data is the main starting point then $C$ is required to satisfy 
some smoothness condition; see~\cite{Ferma}.  

We approach the underlying distribution 
on the unit square not through its distribution function but through its pattern probabilities, 
in rough analogy to the representation of a distribution on the unit interval by its moments; see also~\cite{GrMet} for a general  discussion. This is related to a current topic in discrete mathematics, 
specifically the investigation of large structures and associated  limit concepts. For graphs, this has led
to a topology based on subgraph frequencies and the characterization of the limits
as graphons; see the research monograph~\cite{Lov}. 
Hoppen et al.~\cite{Hopp} developed an analogue for permutations, based on 
convergence of pattern frequencies, and described the possible limits as
\emph{permutons}, which in fact are the same as two-dimensional copulas. Through pattern 
counting, a permutation appears as a function on the set of permutations rather 
than as the support of a discrete distribution with an empirical copula as its distribution function. 
A corresponding functional central limit theorem
can then be obtained without additional smoothness assumptions. 
Important in this context is the relation to $U$-statistics, already used by Hoeffding in~\cite{Hoeffding}, together with a decomposition that now bears his name. 

Thus, the statistical model underlying the sequence $(\Pi_n)_{n\in\bN}$ of random
permutations is specified by a two-dimensional copula $C$. 
Many families of such copulas have been introduced and discussed in the literature.
Our own interest in this circle of ideas arose in the context
of delay models, see~\cite{BaGrDelDat}. In a queuing context, for example, random
vectors $Z_i=(X_i,Y_i)$, $i=1,\ldots,n$, may appear as arrival and departure times,
with $Y_i=X_i+D_i$ and $D_i\ge 0$ the service times, and we observe the permutation 
that connects the respective orders of arrivals and departures. The resulting 
\emph{delay copulas} will appear repeatedly below.

In Section~\ref{sec:cop} we introduce some basic notation, and  
in Section~\ref{sec:FCLT} we prove the functional central limit theorem for pattern frequencies.
This is then used in Section~\ref{sec:nonpartests}  to obtain nonparametric goodness-of-fit tests, 
two-sample tests,  and tests for symmetry that asymptotically attain the given 
level and are consistent against all alternatives. Such tests have been
investigated earlier in \cite{JASAsymm,FermaGOF,Remillard}, with test statistics
based directly on the empirical copulas. None of the resulting procedures is distribution 
free; indeed,  the limit distribution of the test statistics depends on the
relevant hypothesis in a complicated way, especially so in the context of the two-sample
problem. We develop a suitable variant of the bootstrap that can be used to obtain
critical values that are asymptotically correct. 
A pattern-based approach is also of interest in the parametric context. In Section~\ref{sec:parametric} 
we consider two families in some detail, the Farlie-Gumbel-Morgenstern copulas, and delay copulas 
with exponential delay distribution, as introduced in~\cite{BaGrDelDat}. 
A drawback of the pattern-based procedures is the enormous amount of computation needed, 
even for the calculation of the test statistics.  In Section~\ref{sec:implementation} we address 
implementation aspects, and we report the results of a simulation study for the special case of testing for independence.

\section{Basic concepts and notation}\label{sec:cop}

Let $\bS_n$ be the set of permutations $\pi$ of the set $[n]:=\{1,\ldots,n\}$.  
We use the one-line notation $(\pi(1),\ldots,\pi(n))$, which emphasizes the order 
isomorphism aspect, and we write 
$\bS:=\bigsqcup_{n\in\bN}\bS_n$ for the set of all (finite) permutations,
with $|\sigma|:=k$ if $\sigma\in\bS_k$. On occasion we will further compactify 
the notation, for example by writing 1 for the single element of $\bS_1$, 
or 541362 for $\pi=(5,4,1,3,6,2)$.
For $\sigma \in\bS_m$, $\pi\in\bS_n$ with $n\ge m$, and 
$A=\{i_1,\ldots,i_m\}$ with $1\le i_1<\cdots<i_m\le n$, 
we say that $\sigma$ occurs at $A$ as a pattern in $\pi$
if $\sigma(j)<\sigma(k)$ and $\pi(i_j)<\pi(i_k)$ are equivalent for all $j,k\in[m]$,
and then write $\pi(A)=\sigma$.
Thus $\sigma$ provides an order isomorphism between $[m]$ and the restriction 
$(\pi(i_1),\ldots,\pi(i_m))$ of $\pi$ to~$A$. 
The (relative) \emph{pattern frequency} of $\sigma$ in $\pi$ is
\begin{equation}\label{eq:defT}
 t(\pi,\sigma) \, := \, \frac{\# \{A\subset [n]:  
                            \#A=m,\,\pi(A)=\sigma\}}{\binom{n}{m}}
\end{equation} 
if $n\ge m$; we augment this by putting $t(\pi,\sigma):=0$ if $n<m$.
If, for example, if $\pi=541362$
and $\sigma=231$ then $\pi(A)=\sigma$ for $A$ equal to $\{1,5,6\}$, $\{2,5,6\}$
or $\{4,5,6\}$, so that $t(\pi,\sigma)=3/20$.

The value $t(\pi,\sigma)$ may be interpreted as the probability of observing $\sigma$ 
in $\pi$ if $A\subset [n]$ with $\#A=m$ is chosen uniformly at random.
The sampling interpretation also implies an important connection between
pattern frequencies,
\begin{equation}\label{eq:back}
  t(\pi,\sigma) =\sum_{\rho\in\bS_l} t(\pi,\rho) \, t(\rho,\sigma)
           \quad\text{for all }k\le l\le n,\, \pi\in\bS_n,\sigma\in\bS_k.
\end{equation}
Indeed these are the Chapman-Kolmogorov equations for a suitably chosen Markov chain running 
backwards in time; see~\cite{GrMet}. Considering all pattern densities simultaneously 
we may regard any $\pi\in\bS$ 
as a function on $\bS$ via $\pi \mapsto t(\pi,\sigma)$, $\sigma\in\bS$. This
embeds $\bS$ injectively into the linear space $\bR^\bS$ of real-valued functions on $\bS$.

A tuple $(x_1,\ldots,x_n)$ of pairwise distinct real numbers defines
a permutation $\ra_x\in\bS_n$ via $\ra_x =(r_1,\ldots,r_n)$, where 
$r_k:=\#\{i\in [n]: x_i\le x_k\}$ is the rank of $x_k$ in the data set.
Note that the inverse $\tau=\ra_x^{-1}$ leads to the associated order statistics
in the sense that 
$x_{\tau(1)}<\cdots < x_{\tau(n)}$. Suppose now that $\mu$ is a probability 
measure on (the Borel subsets) of $\bR^2$
with continuous marginal distribution functions $F_1,F_2$, where 
$F_1(x)=\mu((-\infty,x]\times \bR)$ and $F_2(y)=\mu(\bR\times (-\infty,y])$
for all $x,y\in\bR$. Let $(Z_n)_{n\in\bN}$ with $Z_n=(X_n,Y_n)$ be a sequence
of independent two-dimensional random vectors with distribution $\mu$. 
As $F_1$ and $F_2$ are continuous 
we may assume that there are no ties in the component sequences $(X_n)_{n\in\bN}$ 
and $(Y_n)_{n\in\bN}$. Thus, for each $n\in\bN$
there exists a unique $\pi=\Pi_n\in\bS_n$ with the property that $X_i<X_j$ and
$Y_{\pi(i)}<Y_{\pi(j)}$ are equivalent for $i,j\in[n]$. The permutation 
$\pi$ connects the two orders induced on $[n]$ via the ranks, 
$\pi=\ra_y\circ \ra_x^{-1}$ where $\ra_x$ is given by the first $n$ values
of the $X$-sequence, and $\ra_y$ the analogue for the second coordinate.
We thus obtain a  sequence $(\Pi_n)_{n\in\bN}$ of random permutations, 
with $|\Pi_n|\equiv n$ for all $n\in\bN$. 

By a (two-dimensional) copula we mean the distribution function $C:[0,1]^2\to[0,1]$ associated 
with a probability measure on the unit square that has uniform marginals, i.e.\ $C(u,1)=C(1,u)=u$
for $0\le u\le 1$. With $\mu,F_1,F_2,X_1,Y_1$ as in the previous paragraph an associated copula arises
as the distribution function of the transformed variables,
\begin{equation*}
    C(u,v) = P\bigl(F_1(X_1)\le u, F_2(Y_1)\le v\bigr), \quad 0\le u,v\le 1.
\end{equation*} 
Obviously, $((U_n,V_n))_{n\in\bN}$ with $U_n=F_1(X_n)$, $V_n=F_2(Y_n)$
is a sequence of independent random vectors with distribution function $C$ 
that leads to the same 
sequence of random permutations as the original sequence $(Z_n)_{n\in\bN}$. This 
means that we may use $\mu$ or $C$ as starting point and refer to $(\Pi_n)_{n\in\bN}$ 
as being generated by $\mu$ or by $C$, as we will do repeatedly below.

In the special case that $X_i$ and $Y_i$ are independent we obtain $C(u,v)=uv$, the 
independence copula. At the other extreme, with $X_i=Y_i$, the copula is given by
$C(u,v)=u\wedge v$. Following~\cite{BaGrDelDat}, we write $C=C(G)$ for the delay 
copula associated with the delay distribution with distribution function $G$. Here
$C(G)$ is associated with $(X_i,Y_i)=(X_i,X_i+D_i)$, the $X$-variables are uniformly
distributed on the unit interval, $X_i$ and $D_i$ are independent, and $G$ is the 
distribution function of the delays $D_i$, $i\in\bN$.
 
The above embedding of $\bS$ into $\bR^\bS$ leads to a notion of convergence
for (deterministic) sequences $(\pi_n)_{n\in\bN}$ through the convergence of the pattern 
frequencies $(t(\pi_n,\sigma))_{n\in\bN}$, $\sigma\in\bS$. In their seminal 
work~\cite{Hopp}, Hoppen et al. introduced and discussed this concept
and characterized the limits in the case that $|\pi_n|\to\infty$ as 
two-dimensional copulas, which they called \emph{permutons}: For such a limit $C$,
let $t(C,\sigma)$ be the probability that the random permutation $\Pi_k$ 
in the sequence generated by $C$ is equal to $\sigma\in\bS_k$. Then
$\pi_n\to C$ in the pattern frequency topology means that 
$\lim_{n\to\infty} t(\pi_n,\sigma)=t(C,\sigma)$
for all $\sigma\in\bS$.  Thus, with $C$ we associate 
the function $C^\bS:\bS\to [0,1]$ given by
\begin{equation}\label{eq:defC}
     C^\bS(\sigma)\,  =\,  t(C,\sigma) 
          \, = \,\bP(\Pi_k=\sigma) \quad \text{for all } \sigma\in\bS_k,\,k\in\bN.
\end{equation} 
If $Z_n=(U_n,V_n)$, $n\in\bN$,
are independent random vectors with distribution function $C$ then
\begin{equation}\label{eq:CSdef}
  C^\bS(\sigma) \;
          =\;k!\,\bP\bigl (U_1<\ldots<U_k,
                          V_{\sigma^{-1}(1)}<\ldots<V_{\sigma^{-1}(k)}\bigr).
\end{equation}
From \eqref{eq:back} we obtain the relations
\begin{equation}\label{eq:backC}
  C^\bS(\sigma) =\sum_{\rho\in\bS_l} C^\bS(\rho) \, t(\rho,\sigma)
           \quad\text{for all }k\le l,\,\sigma\in\bS_k.
 \end{equation}
The copula $C$ defines $C^\bS$, but is itself also determined by $C^\bS$, 
see \cite[Lemma 5.1]{Hopp}. Further, for copulas
$C,C_1,C_2,\ldots$ pointwise convergence of the functions $C_n^\bS$ to
$C^\bS$ is equivalent to weak convergence of the associated probability measures, 
see \cite[Lemma 5.3]{Hopp}.

Finally, the connection to $U$-statistics is useful in the present context. 
For a pattern $\sigma\in\bS_k$
let the function $h_\sigma:\bigl([0,1]\times[0,1]\bigr)^{k}\to\{0,1\}$ indicate 
whether or not the permutation 
associated with $(z_1,\ldots,z_k)$ is equal to $\sigma$, i.e., with $1(\,\cdot\,)$ as generic notion
for the indicator function,
\begin{equation}\label{eq:hsigma}
  h_\sigma\big ((u_1,v_1),\ldots,(u_k,v_k)\big )\,
       =\sum_{\tau\in\bS_k}1(u_{\tau(1)}<\ldots<u_{\tau(k)},
                v_{\tau\circ \sigma^{-1}(1)}<\ldots<v_{\tau\circ\sigma^{-1}(k)})
\end{equation}  
for $z_j=(u_j,v_j)\in [0,1]\times [0,1]$, $j=1,\ldots,k$. 
Then
\begin{equation}\label{eq:ustat}
     t(\Pi_n,\sigma)\; = \;
         \frac{1}{\binom{n}{k}}\sum_{1\le j_1<\ldots<j_k\le n}
                       h_\sigma(Z_{j_1},\ldots,Z_{j_k}), \quad n\ge k,
\end{equation}
so that $(t(\Pi_n,\sigma))_{n\ge k}$ is a sequence of $U$-statistics with (symmetric) kernel $h_\sigma$.

\section{A functional central limit theorem}\label{sec:FCLT}
Let $(\Pi_n)_{n\in\bN}$ be a sequence of random permutations generated by a copula $C$.
Then 
\begin{equation}\label{eq:asC}
   t(\Pi_n,\sigma)\,\to\, t(C,\sigma) = C^\bS(\sigma) 
                         \quad\text{ a.s.\ for all } \sigma\in\bS
\end{equation}
as $n\to\infty$; see~\cite{BaGrDelDat} for a proof using $U$-statistics. 
In the `functional view' all patterns are regarded
simultaneously, so that $(T_n)_{n\in\bN}$ with
\begin{equation}\label{eq:defTn}
    T_n(\sigma) = t(\Pi_n,\sigma)\quad \text{for all }\sigma\in\bS,\,n\in\bN,
\end{equation}
becomes a sequence of random variables 
in an infinite-dimensional linear space. We will use a suitably chosen 
Hilbert space $\bH\subset \bR^\bS$.
For this, let $p=(p_\sigma)_{\sigma\in\bS}$ be a probability mass 
function on $\bS$ with the property that $p_\sigma>0$ for all $\sigma\in\bS$, and
let $\bH:=\ell^2(\bS,p)$, with inner product
\begin{equation*}
 \langle f,g\rangle \, =\, \sum_{\sigma\in\bS} p_\sigma f(\sigma)g(\sigma)
            \quad\text{for all }f,g\in\bH,
\end{equation*} 
and associated norm $\| f \|:= \langle f,f\rangle^{1/2}$ for all $f\in\bH$.
Note that $t(\Pi_n,\sigma)^2\le 1$, so that the $T_n$'s in~\eqref{eq:defTn} 
are elements of $\bH$. 
Moreover, it follows from~\eqref{eq:asC} and dominated
convergence that the random functions $T_n$ converge to $C^\bS$ almost surely 
in $\bH$ as $n\to\infty$. Hence, in statistical terms, the pattern frequency functions 
$T_n$ provide a strongly consistent estimator for the function $C^\bS$.
It also follows from~\eqref{eq:defC} and~\eqref{eq:ustat} with $n=k$ that
\begin{equation*}
    ET_n(\sigma) \,=\, ET_k(\sigma) \,=\, C^\bS(\sigma)\quad\text{if } n\ge k:= |\sigma|,
\end{equation*}
meaning that, loosely speaking, $T_n$ is unbiased.

A next step would be the asymptotic normality of a suitably scaled difference between
$\Pi_n$, represented by $T_n$, and the limiting copula $C$, 
represented by the function $C^\bS$.  
Again we consider these as elements of $\bH=\ell^2(\bS,p)$,
which leads to stochastic processes with `time parameter' $\sigma\in\bS$.
We now use the specific mass function $p=(p_\sigma)_{\sigma\in\bS}$ on 
$\bS$ given by 
\begin{equation}\label{eq:defp}
  p_\sigma\,=\, \gamma\, |\sigma|!^{-2}\, 2^{-|\sigma|} \ 
                  \text{ for all }\sigma\in\bS, 
                    \text{ with } \gamma:= e^{-1/2}(1-e^{-1/2})^{-1}.
\end{equation}
Note that $q_k:=\gamma\, k!^{-1}2^{-k}=\sum_{\sigma\in\bS_k}p_\sigma$, $k\in\bN$, 
is the probability mass function  of the zero-truncated Poisson distribution 
with parameter $1/2$. Choosing a
permutation $\sigma\in\bS$ according to $p$ means that first the size 
$|\sigma|=k$ of the permutation is chosen with probability $q_k$, and that the permutation
$\sigma\in \bS_k$ is then chosen uniformly at random, i.e.\ with probability $1/k!$. 
We write `$\toweak$' and `$\todistr$' respectively 
for weak convergence of probability measures 
and distributional convergence
of random variables in $\bH$ as well as in the euclidean spaces~$\bR^d$, $d\in\bN$.
In connection with copulas $C_n\toweak C$ refers to the corresponding probability 
measures on the unit square.

We require an extension of $C^\bS$ to two arguments. With $Z_1,Z_2,Z_2',\ldots$ independent with 
distribution function $C$  let
\begin{equation}\label{eq:defCS2}
  C^{\bS,2}(\sigma,\tau)\, :=\, 
               \bP\bigl(\Pi_k(Z_1,Z_2,\ldots,Z_k)=\sigma, \,\Pi_j(Z_1,Z'_2,\ldots,Z'_j)=\tau\bigr)
\end{equation}
for $\sigma\in\bS_k,\tau\in\bS_j$. Note that $C^{\bS,2}(\sigma,1)=C^{\bS,2}(1,\sigma)=C^\bS(\sigma)$.

In view of later needs we consider two extensions of the usual setup. First, we let the
copula generating $\Pi_n$ depend on $n$. Hence, for each $n\in\bN$, let $C_n$
be a copula and let $\Pi_n$ be the random permutation generated by a sample 
$Z_{n,1},\ldots,Z_{n,n}$ of size $n$ from $C_n$. We will later assume that 
$C_n$ converges weakly to some copula $C$. This  implies the convergence of
$C_n^\bS(\sigma)$ to $C^\bS(\sigma)$ for all $\sigma\in\bS$, see the remarks 
following~\eqref{eq:backC}. We need the following extension of this property 
to the functions defined in \eqref{eq:defCS2}.

\begin{lemma}\label{lem:CS2}
If $C_n\toweak C$ as $n\to\infty$ then 
$\; \lim_{n\to\infty}C_n^{\bS,2}(\sigma,\tau) = C^{\bS,2}(\sigma,\tau)$
for all $\sigma,\tau\in\bS$.
\end{lemma}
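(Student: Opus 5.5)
Our plan is to write $C_n^{\bS,2}(\sigma,\tau)$ as the probability of a single event defined by $2k-1$ independent points, and then apply the continuous mapping theorem. Let $\sigma\in\bS_k$ and $\tau\in\bS_j$, and put $m:=k+j-1$. Let $\mu_n$ and $\mu$ be the probability measures on $[0,1]^2$ with distribution functions $C_n$ and $C$. Then $C_n^{\bS,2}(\sigma,\tau)=\mu_n^{\otimes m}(B)$ and $C^{\bS,2}(\sigma,\tau)=\mu^{\otimes m}(B)$, where $B\subset([0,1]^2)^m$ is the set of tuples $(z_1,z_2,\ldots,z_k,z_2',\ldots,z_j')$ such that
\begin{equation*}
h_\sigma(z_1,\ldots,z_k)=1 \quad\text{and}\quad h_\tau(z_1,z_2',\ldots,z_j')=1 .
\end{equation*}
Weak convergence $\mu_n\toweak\mu$ on the separable space $[0,1]^2$ implies weak convergence of the product measures, $\mu_n^{\otimes m}\toweak\mu^{\otimes m}$. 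For example, convergence of the characteristic functions of products follows from convergence of the factors. By the portmanteau theorem it therefore suffices to show that $\mu^{\otimes m}(\partial B)=0$.

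The only step needing care is this boundary estimate. The set $B$ is a finite union of sets defined by strict inequalities among the coordinates $u$ and $v$ of the $m$ points, so it is relatively open. Its boundary is contained in the union of the hyperplanes $\{u_a=u_b\}$ and $\{v_a=v_b\}$ with $a\neq b$ ranging over the $m$ points. Under $\mu^{\otimes m}$ the $m$ points are independent, and each has uniform marginals because $C$ is a copula. Hence for $a\ne b$ we get $\bP(U_a=U_b)=\int \bP(U_a=u)\,du=0$, since the uniform distribution has no atoms, and the same holds for the $V$-coordinates. It follows that $\mu^{\otimes m}(\partial B)=0$, and the portmanteau theorem gives $\mu_n^{\otimes m}(B)\to\mu^{\otimes m}(B)$, which is the assertion.

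If one prefers to avoid the product-measure step, the same conclusion follows from Skorokhod's representation. Realize the points $Z_{n,i}\to Z_i$ almost surely, independently over $i$. Then $h_\sigma$ and $h_\tau$ are continuous at the limit point almost surely, because there are no ties there. The indicator of $B$ therefore converges almost surely, and bounded convergence finishes the proof. In either route the main (mild) obstacle is confirming that ties occur with probability zero under the limit copula. This is guaranteed precisely by the uniform marginals; the copulas $C_n$ may otherwise be arbitrary.
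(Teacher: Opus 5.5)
Your proposal is correct and follows essentially the same route as the paper. Both arguments represent $C_n^{\bS,2}(\sigma,\tau)$ via the $(k+j-1)$-fold product measure, use weak convergence of the products, and observe that the indicator is continuous off the tie set, which is null under the limit. The only slip is cosmetic: ``$2k-1$ independent points'' should read $k+j-1$, as you yourself set $m:=k+j-1$.
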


\begin{proof} Suppose that $\sigma\in\bS_k$, $\tau\in\bS_j$ where we may assume that $k,j>1$.
From its definition it follows that $C_n^{\bS,2}(\sigma,\tau)$ can be written
as the integral of the function
\begin{equation*}
    (z_1,z_2,\ldots,z_k,z_2',\ldots,z_j')\; \mapsto \; 
                 h_\sigma(z_1,z_2,\ldots,z_k)\, h_\tau(z_1,z_2',\ldots,z_j)
\end{equation*}
with respect to the joint distribution of $Z_1,\ldots,Z_k,Z_2',\ldots,Z_j'$, 
which is the power $k+j-1$ of the measure associated with $C_n$. The latter
converges weakly to the corresponding power of $C$, which 
leads to the integral $C^{\bS,2}(\sigma,\tau)$. The statement thus follows
from the boundedness of the integrand and the fact that the integrand is continuous 
outside the set of arguments with ties in the coordinate values, which is a set 
of probability zero for the limit measure.  
\end{proof}

We now introduce the stochastic process $W_n=(W_n(\sigma))_{\sigma\in\bS}$ of 
scaled differences between $T_n$ and $C^\bS$ defined by 
\begin{equation}\label{eq:defW}
  W_n(\sigma)\, :=\,
        \sqrt{n}\,|\sigma|^{-1}\,\bigl( T_n(\sigma) - C_n^\bS(\sigma)\bigr)\quad\text{if }n \ge k:=|\sigma|, 
\end{equation}
and $W_n(\sigma):=0$ for $n<k$.  In the second of the extensions announced above we introduce
a `truncation parameter' $k=k_n\le n$, leading to processes $W_{n,k_n}$, $n\in\bN$, defined by
\begin{equation}\label{eq:defWtrunc}
  W_{n,k_n}(\sigma)\, :=\,
          \begin{cases}W_n(\sigma), &\text{if }|\sigma|\le k_n,\\ \ 0, &\text{otherwise.}\end{cases}
\end{equation}
Clearly, $W_n=W_{n,n}$. 
Convergence in distribution of the processes as $n\to\infty$ follows from convergence of the associated 
finite-dimensional distributions, see below, and tightness of the sequence. 

Let  $(\mu_n)_{n\in\bN}$ be a sequence of probability distributions $\mu_n$ on some separable infinite-dimensional real 
Hilbert space with inner product $\langle\cdot,\cdot\rangle$ and complete orthonormal basis $\{e_i:\, i\in\bN\}$.
For a probability measure $\mu$ on $\bH$ and $h\in\bH$ let  $\mu^h$ be the push-forward of $\mu$ under 
the map $\bH\ni x\mapsto \langle x,h \rangle\in\bR$. Similarly, for $m\in\bN$ and basis elements $e_1,\ldots,e_m$, 
let $\mu^{e_1,\ldots,e_m}$ be the push-forward of $\mu$ under the map
$\bH\ni x\mapsto (\langle x,e_1\rangle,\ldots,\langle x,e_m\rangle)\in\bR^m$. Suppose now that we have convergence 
of the finite-dimensional distributions in the sense that
\begin{equation}\label{eq:weak_conv2}
    \mu_n^{e_1,\ldots,e_m}\toweak \mu^{e_1,\ldots,e_m}\quad\text{for all}~m\in\bN. 
\end{equation}  
Then tightness will follow if 
 \begin{equation}\label{eq:proho}
 \lim_{k\to\infty}\,\sup_{n\in\bN} \; 
         \sum_{i=k}^\infty \int \langle x,e_i\rangle^2\, \mu_n(dx) \; =\; 0
\end{equation}
is satisfied; see \cite[Section 1.8]{VaartWellner}.

Here we have $\bH=\ell^2(\bS,p)$, in which case a suitable basis
is $\{p_\sigma^{-1/2} 1_\sigma:\, \sigma\in\bS\}$, with $1_\sigma(\tau)=1$ if $\tau=\sigma$ and 0 otherwise. 
Then, with $\mu_n$ as the distribution of $W_{n,k_n}$ on the Hilbert space $\bH$, \eqref{eq:weak_conv2} 
translates to the convergence in distribution of the random vectors $(W_{n,k_n}(\sigma_1),\ldots,W_{n,k_n}(\sigma_m))$
for all $m\in\bN$, $\sigma_1,\ldots,\sigma_m\in\bS$.

\begin{theorem}\label{thm:patternCLT}
Suppose that $(C_n)_{n\in\bN}$ is a sequence of copulas that converges weakly
to some copula $C$ as $n\to\infty$. Suppose further that $(k_n)_{n\in\bN}$ is a sequence 
of nonnegative integers $k_n\le n$ such that $\lim_{n\to\infty}k_n=\infty$. Then,
with $(W_{n,k_n})_{n\in\bN}$ as defined above it holds that
\begin{equation*}
   W_{n,k_n} \; \todistr\, W_C\quad\text{in }\;\bH\;\text{ as }\, n\to\infty,
\end{equation*}
where  $W_C=(W_{C}(\sigma))_{\sigma\in\bS}$ is an 
$\,\bH$-valued centered Gaussian process 
with covariance function $\rho_C:\bS\times\bS\rightarrow \bR$ given by
\begin{equation}\label{eq:cov}
\rho_C(\sigma,\tau)\, :=\, C^{\bS,2}(\sigma,\tau) - C^\bS(\sigma)\,C^\bS(\tau), 
                      \quad\sigma,\tau\in\bS.
\end{equation}
\end{theorem}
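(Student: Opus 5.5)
We follow the two-step route laid out before the statement: first we prove convergence of the finite-dimensional distributions in the sense of \eqref{eq:weak_conv2}, and then we verify the tightness criterion \eqref{eq:proho} for the basis $\{p_\sigma^{-1/2}1_\sigma\}$.

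\emph{Finite-dimensional distributions.} Fix $\sigma_1,\ldots,\sigma_m\in\bS$. Because $k_n\to\infty$, we have $W_{n,k_n}(\sigma_i)=W_n(\sigma_i)$ for all $i$ once $n$ is large. By \eqref{eq:ustat}, $T_n(\sigma)$ is a $U$-statistic with bounded kernel $h_\sigma$ of degree $k=|\sigma|$, based on the row-wise i.i.d.\ triangular array $Z_{n,1},\ldots,Z_{n,n}$ drawn from $C_n$. We apply the Hoeffding decomposition. The first projection is $h_{\sigma,1}^{(n)}(z)=E\,h_\sigma(z,Z_{n,2},\ldots,Z_{n,k})-C_n^\bS(\sigma)$, and it gives
\begin{equation*}
T_n(\sigma)-C_n^\bS(\sigma)=\frac{k}{n}\sum_{i=1}^n h_{\sigma,1}^{(n)}(Z_{n,i})+R_n(\sigma),\qquad E R_n(\sigma)^2\le c_k\,n^{-2},
\end{equation*}
with $c_k$ depending only on $k$, since the kernel is bounded by $1$. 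Multiplying by $\sqrt n\,k^{-1}$, the factor $k$ cancels, so $W_n(\sigma)=n^{-1/2}\sum_i h_{\sigma,1}^{(n)}(Z_{n,i})+o_P(1)$. The projections are bounded, so the Lindeberg condition holds trivially, and the Cram\'er--Wold device together with the Lindeberg--Feller CLT for triangular arrays gives joint normality. The limit covariance is
\begin{equation*}
\lim_{n\to\infty} E\,h_{\sigma,1}^{(n)}(Z_{n,1})\,h_{\tau,1}^{(n)}(Z_{n,1}) = \lim_{n\to\infty}\bigl(C_n^{\bS,2}(\sigma,\tau)-C_n^\bS(\sigma)C_n^\bS(\tau)\bigr)=\rho_C(\sigma,\tau).
\end{equation*}
This uses \eqref{eq:defCS2} (the event $\Pi_k=\sigma$ is exactly $h_\sigma=1$, via \eqref{eq:hsigma}), Lemma~\ref{lem:CS2}, and the convergence $C_n^\bS\to C^\bS$.

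\emph{Tightness.} Since $\langle W_{n,k_n},p_\sigma^{-1/2}1_\sigma\rangle^2=p_\sigma W_{n,k_n}(\sigma)^2$, condition \eqref{eq:proho} becomes
\begin{equation*}
\sup_n\sum_{\sigma\notin F}p_\sigma\,E\,W_{n,k_n}(\sigma)^2\to 0
\end{equation*}
along an exhausting sequence of finite sets $F$. We can order the basis by size of $|\sigma|$, so it suffices to take $F=\bigcup_{j<K}\bS_j$ and let $K\to\infty$. The key estimate is the classical variance bound for $U$-statistics with kernel bounded by $1$:
\begin{equation*}
\var\,T_n(\sigma)\le \frac{k}{n}\qquad(k=|\sigma|\le n).
\end{equation*}
This follows either from Hoeffding's inequality $\var U_n\le (k/n)\var h$, or directly from $\var U_n=\binom nk^{-1}\sum_c\binom kc\binom{n-k}{k-c}\zeta_c$ together with $\zeta_c\le c\,\zeta_k/k$. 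Hence $E\,W_n(\sigma)^2\le n\,k^{-2}\cdot k/n=1/k\le 1$, uniformly in $n$ and in the copula. Therefore
\begin{equation*}
\sum_{|\sigma|\ge K}p_\sigma\,E\,W_{n,k_n}(\sigma)^2\le\sum_{k\ge K}q_k\to0
\end{equation*}
uniformly in $n$. Here the factor $|\sigma|^{-1}$ in \eqref{eq:defW} is exactly what makes the bound uniform; even without it the weights $p_\sigma$ would suffice, since $k\,q_k$ is summable. The truncation at $k_n$ only lowers these sums, because $W_{n,k_n}(\sigma)$ is either $W_n(\sigma)$ or $0$.

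The main obstacle is the uniformity of the Hoeffding remainder bound across the triangular array; it is handled because every constant depends only on $k$ and on the bound $|h_\sigma|\le1$. The limit process $W_C$ exists as an $\bH$-valued Gaussian element because $\sum_\sigma p_\sigma\rho_C(\sigma,\sigma)\le\sum_k q_k\,k<\infty$, in line with the bound above. Combining finite-dimensional convergence with \eqref{eq:proho} and invoking \cite[Section 1.8]{VaartWellner} then yields $W_{n,k_n}\todistr W_C$.
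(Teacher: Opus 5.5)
Your proof is correct. The finite-dimensional part is the same as the paper's: Hájek projection, Lemma~\ref{lem:CS2} and Lindeberg--Feller.

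The tightness step takes a genuinely different and sharper route. The paper starts from the exact $U$-statistic variance formula and bounds every $\zeta_{n,r}$ and every ratio $(n-k)_{k-r}/(n-1)_{k-1}$ crudely by $1$. This gives $E\,W_n(\sigma)^2\le k!\,2^k/k^2$, which is uniform in $n$ but grows superexponentially in $k=|\sigma|$. The factor $|\sigma|!^{-2}2^{-|\sigma|}$ in the weights \eqref{eq:defp} is exactly what absorbs this growth and yields $\sum_{\sigma\in\bS_k}p_\sigma E\,W_n(\sigma)^2\le\gamma/k^2$.

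You instead use Hoeffding's inequality $\var\,U_n\le (k/n)\var\,h$, equivalently $\zeta_c/c\le\zeta_k/k$ combined with the hypergeometric mean $k^2/n$. This gives $E\,W_n(\sigma)^2\le 1/k$, uniformly in $n$, $\sigma$ and the copula, and hence the much better block bound $q_k/k$. The gain is that your argument proves tightness for any strictly positive probability weights on $\bS$, not only for $r_\sigma\le c\,p_\sigma$ as in part (b) of the Remark after the theorem. As you note, it even works without the normalization $|\sigma|^{-1}$, provided $\sum_k k\,q_k<\infty$.

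You also handle the truncation inside the tightness criterion itself, using $W_{n,k_n}(\sigma)\in\{W_n(\sigma),0\}$ and $W_{n,k_n}(\sigma)=W_n(\sigma)$ eventually for fixed $\sigma$. The paper instead proves the untruncated statement first and then shows $E\|W_n-W_{n,k_n}\|^2\to0$; both routes are fine. The paper's route needs only the variance formula with trivial bounds, but at the cost of tying the result to rapidly decaying weights.
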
 

\begin{proof} We consider the finite-dimensional distributions first.

Let $h_\sigma$ be as in \eqref{eq:hsigma}, let $E:=[0,1]^2$ 
and recall that $\Pi_n$ is based on a sample $Z_{n,1},\ldots,Z_{n,n}$ from $C_n$. 
Let $Z_1,\ldots,Z_n$ be a sample from the limit copula $C$.
For $\sigma\in\bS$ with $|\sigma|=:k>1$ and $r=1,\ldots,k$
let $\Hat h_{n;r,\sigma}:E^r\to \bR$, $n\ge |\sigma|$, and   
$\Hat h_{r,\sigma}:E^r\to \bR$ be defined by
\begin{align*}
  \Hat h_{n;r,\sigma}(z_1,\ldots,z_r)\; 
       &=\; Eh_{\sigma}(z_1,\ldots,z_r,Z_{n,r+1},\ldots,Z_{n,k})\; 
                                                   -\; C_n^\bS(\sigma),\\
  \Hat h_{r,\sigma}(z_1,\ldots,z_r)\; &=\; Eh_{\sigma}(z_1,\ldots,z_r,Z_{r+1},\ldots,Z_{k})\; 
       -\; C^\bS(\sigma)
\end{align*}
for all $z=(z_1,\ldots,z_r)\in E^r$. We put 
$\Hat h_{n;r,\sigma}\equiv 0 \equiv \Hat h_{r,\sigma}$ if $|\sigma|=1$. Let 
\begin{equation*}
 \Hat W_n(\sigma):=n^{-1/2}\sum_{i=1}^n \Hat h_{n;1,\sigma}(Z_{n,i}),\quad n\in\bN.
\end{equation*}
Applying the standard arguments for establishing the asymptotic normality of 
$U$-statistics, see e.g.\ \cite[Theorem 12.3]{vdV},  we obtain that 
\begin{equation}\label{eq:extended_projection}
 W_n(\sigma) - \Hat W_n(\sigma) \, \to \, 0
           \quad\text{in probability as }n\to\infty
\end{equation}
for all $\sigma\in\bS$. For $\Hat W_n$ we have the covariance function
\begin{equation*}
\rho_n(\sigma,\tau)\; 
         :=\; \cov\bigl(\hat h_{n;1,\sigma}(Z_{n,1}),\hat h_{n;1,\tau}(Z_{n,1})\bigr) \;
         =\; C_n^{\bS,2}(\sigma,\tau)\,-\, C_n^\bS(\sigma)\, C_n^\bS(\tau).
\end{equation*}
Using Lemma~\ref{lem:CS2} we see that this converges to
\begin{equation}\label{eq:CS2int}
\rho_C(\sigma,\tau)\; 
         :=\; \cov\bigl(\hat h_{1,\sigma}(Z_{1}),\hat h_{1,\tau}(Z_1)\bigr) \;
         =\; C^{\bS,2}(\sigma,\tau)\,-\, C^\bS(\sigma)\, C^\bS(\tau)
\end{equation}
as $n\to\infty$. The multivariate Lindeberg--Feller central limit theorem 
now implies
\begin{equation}\label{eq:Lindeberg_Feller}
 \bigl(\Hat W_n(\sigma_1),\ldots,\hat W_n(\sigma_m)\bigr) 
               \todistr\  \bigl(W_{C}(\sigma_1),\ldots,W_{C}(\sigma_m)\big)
                                \quad\text{as } n\to\infty
\end{equation}
for all $m\in\bN$ and all $\sigma_1,\ldots,\sigma_m\in\bS$; the Lindeberg condition holds 
because $|\hat h_{n;1,\sigma}(Z_{n,i})|\le 1$. In view
of~\eqref{eq:extended_projection} this gives the convergence of the 
finite-dimensional distributions and also shows that 
a centered Gaussian process $W_C=(W_{C}(\sigma))_{\sigma\in\bS}$ 
with covariance function $\rho_C$ exists on $\bR^\bS$.

In order to prove tightness we make use of~\eqref{eq:proho}.   
For $x\in\bR$ and nonnegative integers 
$n$ let
\begin{equation*}
(x)_n:=\begin{cases}1,& n=0,\\
                    x(x-1)\cdots (x-n+1),& n\ge1,
       \end{cases}
\end{equation*}
be the falling factorials. Fix $\sigma\in \bS$. Of course, 
$0=\rho_C(\sigma,\sigma)=E\bigl(W_n^2(\sigma)\bigr)=0$ for all $n\in\bN$ if $\sigma$ 
is the single element of $\bS_1$. If $k=|\sigma|>1$ let
\begin{equation*}
     \zeta_{n,r}(\sigma)\,:=\,E\bigl  (\Hat h_{n;r,\sigma}^2(Z_{n,1},\ldots,Z_{n,r})\bigr)  
                 \quad\text{for }1\le r\le k\le n,
\end{equation*}
and let  $\zeta_{n,r}(\sigma)\equiv 0$ if $k=1$ or $n<k$.
By the well-known formula for the variance of $U$-statistics, see e.g.~\cite[p.\,163]{vdV}, 
for all $n\ge k$,
\begin{equation*}
    E\bigl(W_n^2(\sigma)\bigr)\,
          =\, \frac{1}{k^2}\sum_{r=\max(1,2k-n)}^k
                         \frac{k!^2}{r!(k-r)!^2}
                               \frac{(n-k)_{k-r}}{(n-1)_{k-1}}\,\zeta_{n,r}(\sigma).
\end{equation*}
Because of $|\zeta_{n,r}(\sigma)|\le 1$ this leads to
\begin{equation}\label{eq:schranke1}
  E\bigl  (W_n^2(\sigma)\bigr )\ \le
     \ \frac{1}{k^2} \sum_{r=1}^{k}\frac{k!^2}{r!(k-r)!^2}
     \ \le\ \frac{k!}{k^2}\,\sum_{r=1}^{k}\binom{k}{r}
                  \ \le\ \frac{k!}{k^2}\,2^{k}
\end{equation}
for all $\sigma\in\bS$ with $k=|\sigma|>1$, and all $n\in\bN$. With 
$\gamma$ as in ~\eqref{eq:defp} we get
\begin{equation}\label{eq:basicbound}
 \sum_{\sigma\in\bS_k}E\langle W_n,e_\sigma\rangle^2
    \; =\; \sum_{\sigma\in\bS_k} p_\sigma EW_n(\sigma)^2 
    \;\le \; \frac{\gamma}{k^2}\,.
\end{equation}
Note that the bound does not depend on $n$. This implies
\begin{equation*}
\lim_{k\to\infty}\,\sup_{n\in\bN}\, 
        \sum_{\sigma\in\bS,|\sigma|\ge k} E\bigl\langle W_n,e_\sigma\bigr\rangle^2\; = \; 0,
\end{equation*}
hence Prohorov's criterion is satisfied.  As explained above, it follows from these two steps 
that $W_n\todistr W_C$ as $n\to\infty$.

It remains to consider the effect of truncation. Using~\eqref{eq:basicbound} we get
\begin{equation*}
    E\bigl\| W_n-W_{n,k_n}  \bigr\|^2\ 
               \le \  \sum_{k_n<j\le n}\;\sum_{\sigma\in\bS_j} p_\sigma EW_n(\sigma)^2\
               \le \  \sum_{k_n<j\le n}\frac{\gamma}{j^2} \ =\ o\bigl(k_n^{-1}\bigr),
\end{equation*}
which implies that $W_n-W_{n,k_n}$ converges to 0 in probability as $n\to\infty$.
\end{proof}

We collect some observations on variants and possible extensions.

\begin{remark} (a) We defined $W_n(\sigma)$ to be 0 if $|\sigma|>n$, but the 
theorem remains valid if~\eqref{eq:defW} is used throughout, with the understanding 
that $T_n(\sigma)=0$ in that case. 

(b) The theorem continues to hold if the weight sequence $(p_\sigma)_{\sigma\in\bS}$ 
in~\eqref{eq:defp} is replaced by some other sequence $(r_\sigma)_{\sigma\in\bS}$ 
as long as, with some $c<\infty$, $r_\sigma\le c\,p_\sigma$ for all $\sigma\in\bS$.
In connection with 
variants for other weight sequences we note that the weights are only needed to obtain
tightness. On a technical level, they appear in the definition of the inner product and 
in connection with the bounds in \eqref{eq:schranke1}, \eqref{eq:basicbound}.

(c) On a more qualitative level one might consider different norms, replacing 
for example
\begin{align*}
	\| f\|^2_{q,2} \
	     =\ \sum_{k=1}^\infty q_k\,\frac{1}{k!}\sum_{\sigma\in\bS_k} f(\sigma)^2\quad
\text{by}\quad
   \| f\|_{q,\infty} \ = \ \sum_{k=1}^\infty q_k\sup_{\sigma\in\bS_k} |f(\sigma)|
\end{align*}
for functions $f:\bS\to\bR$.

(d) The processes $W_n$ depend on their parameter $\sigma\in\bS$ through the associated 
kernel $h_\sigma$. Hence, in some analogy to empirical process theory, one might regard these
as $U$-processes with a specific class of kernels, see e.g.\ \cite{delaPenaGine}. However,
in our situation there is no finite uniform upper bound for the order of 
the kernels in the family of interest.   
\end{remark}  


The limit process $W_C$ may be degenerate, for example if $C(u,v)=u\wedge v$. 
However, this will not happen if
\begin{equation}\label{eq:nondeg}
       C^{\bS,2}(\sigma,\sigma)-C^\bS(\sigma)^2\neq 0
             \quad\text{for some }\sigma\in\bS,
\end{equation} 
as the left hand side is the variance of $W_C(\sigma)$. The non-degeneracy 
condition~\eqref{eq:nondeg} is easily seen to hold in the independence case; 
for delay copulas we have the following result. 

\begin{proposition}\label{prop:delay_unique}
Suppose that $C=C(G)$ is the delay copula associated with a distribution function 
$G$.  Then \eqref{eq:nondeg} holds if
$G(0) = 0$,  and $G(\epsilon) > 0$ for all $\epsilon >0$. 
\end{proposition}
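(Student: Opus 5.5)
The plan is to verify \eqref{eq:nondeg} for the single pattern $\sigma=12\in\bS_2$. By \eqref{eq:CS2int}, the left-hand side of \eqref{eq:nondeg} for this $\sigma$ equals $\var\bigl(\Hat h_{1,12}(Z_1)\bigr)$. So it suffices to show that the function
\begin{equation*}
g(z)\,:=\,E h_{12}(z,Z_2)
\end{equation*}
is not almost surely constant under $C$. Here $g(z)$ is the probability that an independent copy $Z_2$ is concordant with the fixed point $z$.

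Concordance is invariant under the strictly increasing marginal transformations $F_1,F_2$. We may therefore work directly in the delay coordinates. There $Z=(X,X+D)$ with $X$ uniform on $[0,1]$, $D\sim G$, and $X,D$ independent, and we write
\begin{equation*}
g(x,d)\,=\,P\bigl(X'<x,\,X'+D'<x+d\bigr)+P\bigl(X'>x,\,X'+D'>x+d\bigr)
\end{equation*}
with $(X',D')$ an independent copy.

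First key step: $g$ takes values arbitrarily close to $1$ with positive probability. If $X'>x+d$ then, since $D'\ge 0$, also $X'+D'>x+d$. Hence $g(x,d)\ge P(X'>x+d)\ge 1-x-d$. For any $\epsilon>0$ the event $\{X<\epsilon,\,D\le\epsilon\}$ has probability $\epsilon\,G(\epsilon)>0$ by the assumption $G(\epsilon)>0$. On this event $g(X,D)\ge 1-2\epsilon$. So if $g(Z_1)$ were a.s.\ equal to a constant $c$, we would get $c\ge 1-2\epsilon$ for every $\epsilon>0$, i.e.\ $c=1$. Since $Eg(Z_1)=C^\bS(12)$, this would force $C^\bS(12)=1$, i.e.\ discordant pairs would have probability zero.

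Second key step, and the only place where some care is needed: rule out $C^\bS(12)=1$. The conditions $G(0)=0$ and $G(\epsilon)>0$ for all $\epsilon>0$ imply that $D$ is not degenerate. Indeed $D>0$ a.s., and a point mass at some $c>0$ would violate $G(\epsilon)>0$ for $\epsilon<c$. Hence there exist $\delta>0$ and independent copies with $P(D_1>D_2+\delta)>0$. On the event $\{D_1>D_2+\delta\}\cap\{0<X_2-X_1<\delta\}$, which has positive probability by independence, we have $X_1<X_2$ but $X_1+D_1>X_2+D_2$, a discordant pair. Thus $C^\bS(12)<1$, contradicting the conclusion of the previous step. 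Hence $\var\bigl(\Hat h_{1,12}(Z_1)\bigr)>0$ and \eqref{eq:nondeg} holds with $\sigma=12$.

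I expect the main obstacle to be seeing that the hypothesis on $G$ serves exactly these two purposes: it makes small delays possible, which pushes $g$ toward $1$, and it excludes the degenerate comonotone case $D\equiv c$.
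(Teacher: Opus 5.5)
Your proof is correct. It uses the same pattern $\sigma=12$ as the paper, but carries out the key step differently.

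The paper starts from $\rho_C(12,12)=4\var\bigl(C(U,V)-(U+V)/2\bigr)$. This is exactly your $\var\bigl(g(Z_1)\bigr)$, since $g(u,v)=1-u-v+2C(u,v)$. From there the paper argues analytically:
\begin{itemize}
\item a constant value on a set of full measure extends, by continuity of $C$, to the closed support of the copula measure;
\item by a result of \cite{BaGrDelDat}, the curve $\{v=F(u)\}$ lies in that support, where $F$ is the distribution function of $X+D$;
\item along this curve the integral formula for $C(G)$ gives $C(u,F(u))=F(u)$. This forces $F(u)=u$ on $[0,1]$, contradicting $F(1)<1$.
\end{itemize}

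You instead stay in the delay coordinates and argue probabilistically. The bound $g\ge 1-x-d$ holds on the positive-probability event $\{X<\epsilon,\,D\le\epsilon\}$, which pins the putative constant to $1$. Then $C^\bS(12)=Eg(Z_1)=1$, and you rule this out by exhibiting discordant pairs with positive probability.

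Your route is self-contained: it needs no support theorem, no explicit formula for $C(G)$, and no closedness argument. It also shows exactly what the hypothesis is used for: small delays must have positive probability, and $D$ must not be a.s.\ constant. Replacing $0$ by the essential infimum of $D$, the same argument gives nondegeneracy for every delay distribution that is not a point mass; a point mass yields the copula $u\wedge v$, which is degenerate. The paper's route instead reuses structural facts about delay copulas that were already available from earlier work.

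One small imprecision: $F_2=F$ need not be strictly increasing outside $[0,1]$ if the support of $G$ has gaps. Its flat pieces carry no mass, though, so concordance is preserved almost surely; in fact $g$ is unchanged pointwise. That is all you need.
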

\begin{proof}

We first note that, with $C$ the distribution function of $(U,V)$,
$\rho_{C}(12,12)$ is the limiting variance for Kendall's rank correlation coefficient
and may be written as
\begin{equation}\label{eq:var}
    \rho_{C}(12,12)\,=\, 4\,\var\bigl(C(U,V) -(U + V)/2\bigr),
\end{equation} 
see e.g.~\cite{Dengler}. Suppose now that $\rho_{C}(12,12)=0$ for a delay copula $C=C(G)$
with $G$ satisfying the condition given in Proposition~\ref{prop:delay_unique}. Then, for some $c\in\bR$ 
we must have $P(A)=1$ for $A:=\{(u,v):\, C(u,v) - (u+v)/2 - c = 0\}$. Since the function
$(u,v)\mapsto C(u,v) - (u+v)/2 -c$ is continuous, the set $A$ is closed. This implies that
the support $H$ of the measure associated with $C(G)$ is a subset of $A$. 
It is shown in \cite[Section 3]{BaGrDelDat} 
that the set $H_==\{(u,v)\in [0,1]^2: v = F(u)\}$ is a subset of $H$; here 
$F$ is the distribution function of the departure times $X_i+D_i$.
It follows that.
\begin{equation*}
C\bigl(u,F(u)\bigr)\, =\, \bigl(u+F(u)\bigr)/2\,+\,c\quad \text{for all }\,u\in [0,1].
\end{equation*}
Generally, 
\begin{equation*}
  C(G)(u,v)\, = \, \int_0^u G\bigl (F^{-1}(v) - w \bigr )\,dw \quad\text{ for }\,u\in [0,1],~v\in (0,1), 
\end{equation*}
with $F^{-1}(v):=\inf\{y\in \bR: F(y)\ge v\}$. As~$0 = G(0) < G(\epsilon)$~for all $\epsilon >0$ and
$F(u)=\int_0^u G(u - w)\,dw = \int_0^u G(w)\,dw$ for all $u\in [0,1]$, the function $F$ is continuous and
strictly increasing on the unit interval, with $F(0)=0$ and $F(1)<1.$
It follows that $F^{-1}\left (F(u)\right )=u$ and  
\begin{equation*}
  C(G)\bigl (u,F(u)\bigr ))\, = \, \int_0^u G(u - w )\,dw \, = \, F(u) \quad\text{ for }\,u\in (0,1]. 
\end{equation*}
Thus $\bigl (u+F(u)\bigr )/2\,+\,c\,=\,F(u)$, i.e. $F(u)\,=\ u + 2c$ for all $u\in [0,1]$.
Due to $F(0)=0$ it is $c=0$ and therefore $F(u)=u$ for all $u\in [0,1]$, which is in contradiction to
$F(1)<1$. 
\end{proof}

If we replace the condition on $G$ in Proposition \ref{prop:delay_unique} by the stronger condition
$G(0) = 0$, $G(n + \epsilon)  > G(n)$ for all $\epsilon > 0$ and $n\in\bN_0$ with $G(n) < 1$,
then $G$ is determined by $C(G)$, see~\cite[Theorem 4]{BaGrDelDat}. This means that consistent goodness-of-fit or 
two-sample tests 
for delay distributions can then be obtained from the respective tests for the associated
copulas, see the next section.

\section{Nonparametric tests}\label{sec:nonpartests}
Given the (nonparametric) family of two-dimensional copulas three classical testing problems are
considered, the goodness-of-fit (GF), the two-sample (TS), and the symmetry (SY) testing problem.
For GF and SY we assume that a random permutation $\Pi_n$ generated by a sample of
size $n$ taken from some unknown copula $C$ is available, but not the sample itself.
For TS we start with random permutations $\Pi_{m}^{(1)}$ and  $\Pi_{n}^{(2)}$ generated by 
independent samples of size $m$ and $n$ taken from unknown copulas $C_1$ and $C_2$, respectively.
The type of the testing procedures proposed here is similar to that found in numerous papers
dealing with these classical testing problems for a given nonparametric family $\mathscr{M}$ of distributions on $\bR^d$ in the case where the original samples are available; see 
e.g.\ \cite{BaFr2004,BaFr2010,ChenMeintanisZhu,Csorgoe,Gretton,Meintanis,SzRi} and the 
references given therein. Consider, for example, the TS problem with independent samples
$X_1,\ldots,X_m$ and $Y_1,\ldots,Y_n$ taken from distributions $\mu\in\mathscr{M}$
and $\nu\in\mathscr{M}$ respectively. Suppose that the distributions $\zeta\in\mathscr{M}$ are 
uniquely determined by some associated functions $m_\zeta:\bR^d\rightarrow \bR$, 
e.g.\ distribution functions, Fourier transforms, or Laplace transforms. 
The corresponding Cram\'er--von Mises 
(CvM) and Kolmogorov--Smirnov (KS) type distances are given by 
$\int  \bigl (m_\mu(t)-m_\nu(t)\bigr )^2\,\tau(dt)$ and 
$\sup \bigl \{ w(t) \bigl | m_\mu(t) - m_\nu(t) \bigr |: t\in \bR^d\bigr \}$
with appropriate $\sigma$-finite measure $\tau$ and weight function $w \ge 0$ on $\bR^d$.
Then, substituting the unknown functions $m_\mu$ 
and $m_\nu$ by appropriate estimators $m_{X,m}$ and $m_{Y,n}$
based on the first and the second sample, we obtain estimators of these distances that can serve
as test statistics.

Here we similarly use the fact that a copula $C$ is uniquely determined by the function
$C^{\bS}$, and deal with the CvM and KS type distances of copulas $C_1,C_2$ given by 
$\sum_{\sigma\in\bS}\, p_\sigma\,\abs{\sigma}^{-2}\,\bigl | C_1^{\bS}(\sigma) 
     - C_2^{\bS}(\sigma)\bigr |^2$ and   
$\sup_{\sigma\in\bS}\, p_\sigma^{\nf{1}{2}}\,\abs{\sigma}^{-1}\,
       \bigl | C_1^{\bS}(\sigma) - C_2^{\bS}(\sigma)\bigr |$.
Replacing the unknown $C_i ^{\bS}$ by the estimators $t\bigl (\Pi_m^{(i)},\,\cdot \,\bigr )$, $i=1,2$,
we obtain corresponding test statistics for the TS problem. 

A precise description of the respective procedure
will be given in the following subsections. In each case the asymptotic analysis is based on 
Theorem~\ref{thm:patternCLT}. This includes the use of a truncation parameter $k=k(n)$ or $k=k(n,m)$, meaning
that only patterns of length $k$ or less are used; see also Section~\ref{sec:implementation}. On a general level it
is obvious from the rapid growth of the size of $\bS_k$ that little is to be gained from long pattern. For example,
for a data size of $n=100$ there would be about $1.5 \cdot 10^{25}$ patterns of length $k=25$, and the expected 
number of hits for each of these `cells' is about $0.0156$ in the independence case. One may see here a rough
analogy to choosing the bandwidth in density estimation.

\subsection{Cram\'er-von Mises type goodness-of-fit tests}\label{subsec:CvMGF}
Let $(\Pi_n)_{n\in\bN}$ be a sequence of random permutations generated by an unknown
copula $C$. We consider the problem of testing the hypothesis $H_0: C=C_0$ against
the general alternative $H_1: C\neq C_0$ for some given $C_0$ or, equivalently,  
$H_0:\ C^\bS=C^\bS_0 \ \text{ against } H_1:\,C^\bS\neq C^\bS_0$.
This setup includes testing for independence, where $C_0(u,v)=uv$.
We know that the random functions $T_n$ defined in~\eqref{eq:defTn} provide
a consistent estimator for $C^\bS$.
For $k\in\bN$ let $\bS^{\sle}_{ k}:=\bigsqcup_{m=1}^k \bS_m$ be the set of permutations 
with length at most $k$. Defining the process $W_n$ as in \eqref{eq:defW} with $C=C_0$
Theorem \ref{thm:patternCLT} motivates the Cram\'er--von Mises type test statistic
\begin{equation}\label{eq:defCvM}
  {\rm CvM}\gf_{n,k}\,:=\, n \sum_{\sigma\in \bS^{\le}_k} p_\sigma\,|\sigma|^{-2} \left (T_n(\sigma) - C_0^\bS(\sigma) \right )^2\;=\; \| W_{n,k} \|^2,
\end{equation}
truncated at $k\le n$. For a given significance level $\alpha\in (0,1)$ the hypothesis is rejected
if~${\rm CvM}\gf_{n,k}>c_{n,k,\alpha}$, with $c_{n,k,\alpha}$ as upper 
$\alpha$ quantile of ${\rm CvM}\gf_{n,k}$ under $H_0$.
Let 
\begin{equation*}
  {\rm CvM}\gf\,:=\,\|W_{C_0}\|^2\,
           =\,\sum_{\sigma\in \bS} p_\sigma W_{C_0}^2(\sigma),
\end{equation*}
where $W_{C_0}$ is the centered Gaussian process of Theorem \ref{thm:patternCLT}, with $C=C_0$.

\begin{theorem}\label{thm:CvMGF}
Suppose that $C_0$ satisfies the nondegeneracy condition~\eqref{eq:nondeg} and let $k_n\le n$, $n\in\bN$,
be such that $k_n\to\infty$ as $n\to\infty$. 
Then the Cram\'er-von Mises type goodness-of-fit
test asymptotically attains the given level in the sense that
\begin{equation}\label{eq:CvMcorrect}
     \lim_{n\to\infty} P_{C_0}\bigl({\rm CvM}\gf_{n,k_n}\ge c_{n,k_n,\alpha}\bigr)\,=\, \alpha.
\end{equation}
Further, the test is consistent against the general alternative
in the sense that 
\begin{equation}\label{eq:CvMconsistent}
     \lim_{n\to\infty} P_{C}\bigl({\rm CvM}\gf_{n,k_n}\ge c_{n,k_n,\alpha}\bigr)\,=\, 1
           \quad\text{for all } C\neq C_0.
\end{equation}
\end{theorem}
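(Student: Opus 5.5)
The level statement will follow from Theorem~\ref{thm:patternCLT} applied with the constant sequence $C_n\equiv C_0$ and the given truncation sequence $k_n$. Under $H_0$ this yields $W_{n,k_n}\todistr W_{C_0}$ in $\bH$. The map $f\mapsto\|f\|^2$ is continuous on $\bH$, so by the continuous mapping theorem ${\rm CvM}\gf_{n,k_n}=\|W_{n,k_n}\|^2\todistr {\rm CvM}\gf=\|W_{C_0}\|^2$.

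To pass from distributional convergence to convergence of the quantiles $c_{n,k_n,\alpha}$, I need the distribution function of ${\rm CvM}\gf$ to be continuous and strictly increasing near its upper $\alpha$ quantile $c_\alpha$. I would write the Gaussian element $W_{C_0}$ in its Karhunen--Lo\`eve form. Then $\|W_{C_0}\|^2=\sum_j\lambda_j\xi_j^2$, where the $\xi_j$ are i.i.d.\ standard normal and the $\lambda_j\ge 0$ are the eigenvalues of the covariance operator, which is trace class because $E\|W_{C_0}\|^2<\infty$ by \eqref{eq:basicbound}. The nondegeneracy condition \eqref{eq:nondeg} says $\var W_{C_0}(\sigma)>0$ for some $\sigma$, so $E\|W_{C_0}\|^2>0$ and at least one $\lambda_j>0$. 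Consequently $\|W_{C_0}\|^2$ has a continuous distribution: it is the convolution of a nondegenerate scaled $\chi^2_1$ with an independent nonnegative variable. Its support is an interval $[0,\infty)$, so the distribution function is strictly increasing on $(0,\infty)$. Standard quantile convergence then gives $c_{n,k_n,\alpha}\to c_\alpha$. Combining this with the convergence in distribution and the continuity of the limit at $c_\alpha$ yields $P_{C_0}({\rm CvM}\gf_{n,k_n}\ge c_{n,k_n,\alpha})\to P(\|W_{C_0}\|^2\ge c_\alpha)=\alpha$. I expect this continuity and strict-monotonicity argument to be the main technical point. If the support claim is awkward, it suffices to note that a positive $\lambda_j$ with its own $\xi_j$ makes the distribution function strictly increasing on $(0,\infty)$ by itself.

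For consistency, fix $C\neq C_0$. Since $C$ is determined by $C^\bS$ (\cite[Lemma 5.1]{Hopp}), there is some $\sigma_0$ with $C^\bS(\sigma_0)\neq C_0^\bS(\sigma_0)$. For $n$ large we have $k_n\ge|\sigma_0|$, so
\begin{equation*}
  {\rm CvM}\gf_{n,k_n}\,\ge\, n\,p_{\sigma_0}|\sigma_0|^{-2}\bigl(T_n(\sigma_0)-C_0^\bS(\sigma_0)\bigr)^2 .
\end{equation*}
By \eqref{eq:asC}, $T_n(\sigma_0)\to C^\bS(\sigma_0)$ almost surely under $P_C$. The right-hand side is therefore of exact order $n$ and tends to $\infty$ almost surely. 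On the other hand, $c_{n,k_n,\alpha}\to c_\alpha<\infty$ by the first part, so the rejection probability tends to $1$.
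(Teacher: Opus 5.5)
Your proposal is correct and follows essentially the same route as the paper. It applies the continuous mapping theorem to Theorem~\ref{thm:patternCLT} under $C_0$, uses the chi-square series for $\|W_{C_0}\|^2$ with nondegeneracy supplying a positive eigenvalue (hence a continuous, strictly increasing limit distribution function and $c_{n,k_n,\alpha}\to c_\alpha$), and uses almost sure convergence of $T_n$ under the alternative. The differences are only cosmetic: you apply the truncated form of Theorem~\ref{thm:patternCLT} directly instead of treating $k_n=n$ first and adding a Slutsky step, and you bound the statistic below by the single term $n\,p_{\sigma_0}|\sigma_0|^{-2}\bigl(T_n(\sigma_0)-C_0^\bS(\sigma_0)\bigr)^2$ instead of showing that $n^{-1}{\rm CvM}\gf_{n,k_n}$ converges to the full, strictly positive distance.
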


\begin{proof} We first consider the `untruncated' case, with $k_n=n$, omitting the truncation parameter from the
notation. 

Theorem~\ref{thm:patternCLT} with a constant sequence of copulas all equal to 
the hypothetical $C_0$ together with the Continuous Mapping Theorem lead to
${\rm CvM}\gf_n\todistr {\rm CvM}\gf$.  
For~\eqref{eq:CvMcorrect} it remains to show that the distribution function 
associated with the limit is continuous and strictly increasing. For this,
we use the well-known identity 
\begin{equation}\label{eq:chiseries}
  \|W_{C_0}\|^2\,\eqdistr \sum_{\kappa\in \Lambda}\kappa\,L_\kappa
\end{equation}
for the squared norm of the Gaussian random element $W_{C_0}$ in $\bH$, 
where `$\eqdistr$' denotes equality in distribution. In~\eqref{eq:chiseries}
the set $\Lambda$ is the finite or countably infinite set of positive eigenvalues 
of the integral operator $R:\bH \rightarrow \bH$ associated with the
covariance function $\rho_{C_0}$ of $W_{C_0}$, given by
\begin{equation*}
  Rf(\sigma) = \sum_{\tau\in\bS} p_\tau\,\rho_{C_0}(\sigma,\tau)\,f(\tau)
        \quad\text{for all }f\in\bH\,\text{ and }\sigma\in\bS.  
\end{equation*}
Further,  the random variables $L_\kappa$
in \eqref{eq:chiseries} are independent, 
and $L_\kappa$ has the $\chi_{d_\kappa}^2$ distribution with 
$1\le d_\kappa<\infty$ as the multiplicity of the eigenvalue $\kappa$;
see e.g.\ \cite{Vakhania}.
The operator $R$ is of trace class, and it holds that
\begin{equation*}
  E\,{\rm CvM}\gf \,=\, E\,\|W_{C_0}\|^2\,
        =\,\sum_{\sigma\in\bS}p_\sigma\, \rho_{C_0}(\sigma,\sigma)
              \,=\,\sum_{\kappa\in \Lambda}d_\kappa\,\kappa<\infty.
\end{equation*}
By the nondegeneracy assumption, 
$\rho_{C_0}(\sigma,\sigma) > 0$ for some $\sigma\in\bS$,  
hence~$\Lambda$ is non-empty, which implies that
the distribution function of ${\rm CvM}\gf$ is continuous, 0 at 0, and strictly
increasing on the nonnegative half-line.  
From this we deduce that $c_{n,\alpha}\rightarrow c_\alpha$,
with $c_\alpha$ as the the upper $\alpha$ quantile of ${\rm CvM}\gf$, and that
\begin{equation}\label{eq:level}
  \lim_{n\to\infty} P_{C_0}\bigl({\rm CvM}\gf_n > c_{n,\alpha}\bigr) 
                   \;=\; P_{C_0}\bigl({\rm CvM}\gf > c_\alpha\bigr) \;=\; \alpha,
\end{equation}
which is the first part of the theorem if $k_n=n$. For a general truncation sequence we use
the argument at the end of the proof of Theorem~\ref{thm:patternCLT} to obtain that
${\rm CvM}\gf_{n,k_n}-{\rm CvM}\gf_n $ converges to 0 in probability, together with Slutsky's theorem. 

In order to prove~\eqref{eq:CvMconsistent} we fix an
alternative $C\not= C_0$.  It follows from the almost sure convergence in~$\bH$ of $T_n$
to the respective $C^\bS$ that, if $C$ is the true copula,
\begin{equation*}
 \frac{1}{n}\,{\rm CvM}\gf_{n,k_n}\; =\; \sum_{\sigma\in\bS^\le_{k_n} }     
           p_\sigma\,|\sigma|^{-2}\,\bigl(T_n(\sigma)-C_0^\bS(\sigma)\bigr)^2
   \  \to \  \sum_{\sigma\in \bS} p_\sigma\,|\sigma|^{-2}\, \bigl(C^\bS(\sigma)-C_0^\bS(\sigma)\bigr)^2
\end{equation*}
almost surely, where the limit is strictly positive. Thus, under the alternative,
\begin{equation}\label{eq:power}
  \lim_{n\to\infty} P_C\bigl({\rm CvM}\gf_{n,k_n} > c_{n,k_n,\alpha}\bigr) \,=\, 1 ,
\end{equation}
meaning that the test asymptotically rejects the hypothesis with 
probability one.
\end{proof}

The theorem provides an asymptotic justification for the validity and consistency of a particular procedure.
For its implementation a method is needed to obtain the critical values for given $\alpha$, $n$ and~$k$.
In the goodness-of-fit situation we have a simple hypothesis and, with $C_0$ given, a straightforward 
Monte Carlo approximation to the distribution of the test statistic under the hypothesis can be used
to approximate  $c_{n,k_n,\alpha}$ to an arbitrary degree.  

\subsection{Cram\'er-von Mises type two-sample tests}\label{subsec:CvMTS}
In the two-sample problem we start with independent sequences $(\Pi_m^{(1)})_{m\in\bN}$
and $(\Pi_n^{(2)})_{n\in\bN}$ of random permutations generated by unknown copulas $C_1$ and $C_2$ respectively.
We consider testing the hypothesis $H_0: C_1=C_2$  against the
general alternative $H_1:C_1\neq C_ 2$, which we may reformulate as
\begin{equation*}
  H_0:\; C_1^\bS=C_2^\bS \  \text{ against } H_1:\;C_1^\bS\neq C_2^\bS. 
\end{equation*}
In analogy to the goodness-of-fit situation a two-sample test based on $\Pi_m^{(1)}$ and 
$\Pi_n^{(2)}$ suggesting itself is the Cram\'er--von Mises type test based on the test statistic
\begin{equation*}
{\rm CvM}\ts_{m,n,k}\;
          =\;\frac{mn}{m+n} \sum_{\sigma\in \bS_k^{\le} }
                        p_\sigma\,|\sigma|^{-2} \bigl(T_{m}^{(1)}(\sigma) - T_{n}^{(2)}(\sigma)\bigr)^2,
\end{equation*}
where $T_{m}^{(1)}=t(\Pi_{m}^{(1)},\cdot\,)$,
$T_{n}^{(2)}=t(\Pi_{n}^{(2)},\cdot \,)$. Again, $k\le \min(m,n)$ is a truncation parameter.
We define the $\bS$-indexed processes $W_{m}^{(1)}$, $W_{n}^{(2)}$ 
and $W_{m,n}$ by
\begin{equation}\label{eq:T2W}
  W_{m}^{(1)}(\sigma)\,=\, m^{\nf{1}{2}}
       \,|\sigma |^{-1}\,\bigl(T_{m}^{(1)}(\sigma) - C_1^\bS(\sigma)\bigr),\quad
  W_{n}^{(2)}(\sigma)\,=\, n^{\nf{1}{2}}
       \,|\sigma |^{-1}\,\bigl(T_{n}^{(2)}(\sigma) - C_2^\bS(\sigma)\bigr),
\end{equation}
and
\begin{equation*}
  W_{m,n}(\sigma)\,=\,\Bigl(\frac{mn}{m+n}\Bigr)^{\nf{1}{2}}
       \,|\sigma|^{-1}\,\bigl(T_{m}^{(1)}(\sigma) - T_{n}^{(2)}(\sigma)\bigr) 
\end{equation*}
for all $\sigma\in\bS$. Then the test statistic can be written as
\begin{equation*}
  {\rm CvM}\ts_{m,n,k}\,=\,\sum_{\sigma\in\bS_k^{\le}} p_\sigma W_{m,n}^2(\sigma).
\end{equation*}
Further, if  $C_1=C_2$, 
\begin{align*}
  W_{m,n}\,=\,\Bigl(\frac{n}{m+n}\Bigr)^{\nf{1}{2}} W_{m}^{(1)}\,
                -\,\Bigl(\frac{m}{m+n}\Bigr)^{\nf{1}{2}} W_{n}^{(2)}.
\end{align*}
In what follows, we assume that the sample sizes $m,n$ tend to infinity such 
that the limit
\begin{equation}\label{eq:limsize}
  \lim_{m,n\to\infty}\,\frac{m}{m+n}\,=:\,\gamma\in (0,1) 
\end{equation}
exists. Let $W_{C_1}^{(1)}$, $W_{C_2}^{(2)}$ be independent copies of the Gaussian 
process $W_C$ in Theorem~\ref{thm:patternCLT}, with $C=C_1$ and $C=C_2$ respectively,
so that $W_{m}^{(1)} \todistr W_{C_1}^{(1)}$ and $W_{n}^{(2)} \todistr W_{C_2}^{(2)}$
as $m,n\to\infty$. Due to independence we even have joint convergence
\begin{equation*}
   \bigl(W_{m}^{(1)}, W_{n}^{(2)}\bigr)\; 
                 \todistr\;  \bigl(W_{C_1}^{(1)},W_{C_2}^{(2)}\bigr)
                      \quad\text{as }m,n\to\infty
\end{equation*}
in the product space $\bH\times\bH$.
In particular, if $C_1=C_2=:C$, and as $m,n\to\infty$,
\begin{equation*}
   W_{m,n}\; \todistr\, (1-\gamma)^{\nf{1}{2}}\,W_{C}^{(1)}-\gamma^{\nf{1}{2}}\,W_{C}^{(2)}
       \;\eqdistr\; W_{C}
 \end{equation*}
and we obtain  ${\rm CvM}\ts_{n,m,k(n,m)}\todistr {\rm CvM}\ts:=\|W_{C}\|^2$  as in
Section~\ref{subsec:CvMGF}, provided that $k(n,m)\to\infty$. 

For a test we need critical values. In contrast to the situation in Section~\ref{subsec:CvMGF} we no longer have a simple 
hypothesis, i.e.\ we do not know the shared copula $C=C_1=C_2$ of the samples under $H_0$.
We suggest a variant of
the bootstrap procedure. 

Given the available data, i.e.~the two separate permutations
$\Pi^{(1)}_{m}$ and  $\Pi^{(2)}_{n}$, we need a suitable notion of resampling. 
Evidently, the permutations
cannot be used directly. Instead, following \cite[Definition 3.4]{Hopp} we define, for 
$\sigma\in \bS$, the copula $\check{C}(\sigma)$ by its Lebesgue density
\begin{equation*}
    f_\sigma(u,v)\,=\,n \cdot 1\bigl(\sigma (\lceil n u \rceil) = \lceil n v \rceil\bigr),
                          \quad (u,v) \in [0,1]^2, 
\end{equation*}
and use 
\begin{equation*}
  \check{C}_{m,n}\,:=\,\frac{m}{m+n}\,\check{C}(\Pi_m^{(1)})\, + \, \frac{n}{m+n}\,\check{C}(\Pi_n^{(2)}) 
\end{equation*}
for constructing the resamples. Conditionally on $\Pi_{m}^{(1)}$, $\Pi_{n}^{(2)}$,
let $\Pi_{m,n}^{*(1)}$, $\Pi_{m,n}^{*(2)}$ be the random permutations based
on independent samples of size $m,n$ from  $\check{C}_{m,n}$. Further, with 
$T_{m,n}^{*(1)}(\,\cdot\,)\,=\,t\bigr (\Pi_{m,n}^{*(1)},\,\cdot\,\bigl)$ and
$T_{m,n}^{*(2)}(\,\cdot\,)\,=\,t\bigr (\Pi_{m,n}^{*(2)},\,\cdot\,\bigl)$ let
\begin{equation}\label{eq:boot}
      W_{m,n}^*(\sigma)\,
         =\,\left (\frac{mn}{m+n}\right )^{\nf{1}{2}}\,|\sigma |^{-1}\, 
           \bigl(T_{m,n}^{*(1)}(\sigma) - T_{n,n}^{*(2)}(\sigma)\bigr)
\end{equation} 
for all $\sigma\in\bS$. We now view
\begin{equation*}
  {\rm CvM}_{m,n,k}\sts\,=\,\sum_{\sigma\in \bS_k^{\le }}p_\sigma\,W_{m,n}^{*^2}(\sigma)
\end{equation*}
as the bootstrap version of the test statistic ${\rm CvM}\ts_{m,n,k}$. For a given level 
$\alpha\in (0,1)$ let
\begin{align*}
  c_{m,n,k;\alpha}^*\big (\Pi_{m}^{(1)},\Pi_{n}^{(2)}\big ) 
           &:=\inf\Bigl \{t\in\bR: P\bigl(\,{\rm CvM}_{m,n,k}\sts > t\,
                  \big |\,\Pi_{m}^{(1)},\Pi_{n}^{(2)}\,\big )\le \alpha\Bigr\} ,
\end{align*}                          
where the conditional probability on the right hand side can be approximated to arbitrary
precision by a sufficiently large number of bootstrap resamples. In the following result 
we consider the asymptotic behavior of the two-sample Cram\'er--von Mises type test that 
rejects the hypothesis if the test statistic exceeds the bootstrap critical value. 

\begin{theorem}\label{thm:CvMTS} Suppose that $C_1$ and $C_2$ both satisfy 
the nondegeneracy condition~\eqref{eq:nondeg} and that~\eqref{eq:limsize} 
holds for the sample sizes. Let the truncation values $k(n,m)\in\bN$ be such that 
$k(n,m)\le \min\{n,m\}$ and $\lim_{n,m\to\infty}k(m,n)=\infty$.  Then 
\begin{equation*}
    \lim_{m,n\to\infty} 
     P_{C_1,C_2}\left ({\rm CvM}_{m,n,k(n,m)}\ts>c_{m,n,k(n,m);\alpha}^*
      \big (\Pi_{m}^{(1)},\Pi_{n}^{(2)}\big ) \right )\;
     =\; \begin{cases}\alpha, &\text{if } C_1 = C_2,\\
                      1, &\text{if } C_1 \neq C_2.     
                    \end{cases}
\end{equation*}
\end{theorem}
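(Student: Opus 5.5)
The plan is to treat the bootstrap as a triangular-array problem, which is exactly what the extension of Theorem~\ref{thm:patternCLT} to $n$-dependent copulas $C_n$ was built for. Conditionally on the data, the resamples come from the copula $\check C_{m,n}$, which is itself random. The first step is to show that, almost surely,
\begin{equation*}
  \check C_{m,n}\toweak \bar C:=\gamma C_1+(1-\gamma)C_2 \quad\text{as } m,n\to\infty .
\end{equation*}
For this I use the result of Hoppen et al.\ that $\check C(\pi_n)$ and $\pi_n$ have asymptotically the same pattern frequencies: $|t(\check C(\pi),\sigma)-t(\pi,\sigma)|\le c_\sigma/|\pi|$ for fixed $\sigma$. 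Combined with~\eqref{eq:asC}, this gives $\check C(\Pi^{(i)})^\bS\to C_i^\bS$ almost surely. By~\cite[Lemma 5.3]{Hopp}, $\check C(\Pi_m^{(1)})\toweak C_1$ and $\check C(\Pi_n^{(2)})\toweak C_2$ almost surely, and mixtures with weights converging by~\eqref{eq:limsize} then converge weakly to $\bar C$. Note that $\bar C$ is a copula, and $\bar C=C$ under $H_0$.

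Next I fix an $\omega$ in this probability-one set and apply Theorem~\ref{thm:patternCLT} twice, with the deterministic copula sequence $\check C_{m,n}(\omega)\toweak\bar C$ and sample sizes $m$ and $n$ respectively. The theorem centers at $C_n^\bS$, and here both bootstrap samples are centered at the same $\check C_{m,n}^\bS$. Hence the centering cancels in~\eqref{eq:boot}, and
\begin{equation*}
W^*_{m,n}=\Bigl(\tfrac{n}{m+n}\Bigr)^{1/2}W^{*(1)}_m-\Bigl(\tfrac{m}{m+n}\Bigr)^{1/2}W^{*(2)}_n .
\end{equation*}
The two pieces are conditionally independent, and each converges to an independent copy of $W_{\bar C}$. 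So $W^*_{m,n}\todistr W_{\bar C}$ in $\bH$. Truncation at $k(m,n)\to\infty$ is handled by the bound at the end of the proof of Theorem~\ref{thm:patternCLT}, which is uniform in the copula. The continuous mapping theorem then gives ${\rm CvM}\sts_{m,n,k}\todistr\|W_{\bar C}\|^2$ conditionally, almost surely.

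To pass from distributional convergence to convergence of the quantiles, I need $\|W_{\bar C}\|^2$ to have a continuous, strictly increasing distribution function. As in the proof of Theorem~\ref{thm:CvMGF}, it suffices to show $\bar C$ is nondegenerate. Under $H_0$ this is the assumption on $C_1$. Under the alternative, I use that $\rho_{\bar C}(\sigma,\sigma)$ is the variance of $E[h_\sigma\mid Z_1]$ taken under a mixture, which dominates the corresponding mixture of within-component variances computed at $\bar C$'s centering. I expect this to be the main obstacle: the centering terms differ, so the inequality needs care. If it fails, I would instead argue via a specific pattern such as $\sigma=12$ and formula~\eqref{eq:var}, or simply note that $c^*$ stays bounded, which is all the consistency part needs.

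Given this, $c^*_{m,n,k;\alpha}\to c_\alpha(\bar C)$ almost surely. Under $H_0$ the test statistic converges in distribution to $\|W_C\|^2$ with the same law, so the rejection probability tends to $\alpha$ by Slutsky's theorem. Under $C_1\ne C_2$, the bootstrap critical values stay bounded almost surely. Meanwhile $\frac{m+n}{mn}{\rm CvM}\ts_{m,n,k}\to\sum_\sigma p_\sigma|\sigma|^{-2}(C_1^\bS(\sigma)-C_2^\bS(\sigma))^2>0$ almost surely, by the a.s.\ convergence of $T^{(i)}$ in $\bH$ and uniqueness~\cite[Lemma 5.1]{Hopp}. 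The statistic therefore diverges and the rejection probability tends to~$1$.
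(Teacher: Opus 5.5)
Your proposal is correct and follows essentially the same route as the paper. Both arguments use almost sure weak convergence $\check C_{m,n}\toweak \gamma C_1+(1-\gamma)C_2$ via \cite[Lemma 3.5]{Hopp}, the decomposition~\eqref{eq:mix} with common centering $\check C_{m,n}^\bS$, and a conditional application of Theorem~\ref{thm:patternCLT} to each bootstrap piece; they then use quantile convergence under $H_0$ and only boundedness of the bootstrap critical value under the alternative (the paper phrases this as $(mn/(m+n))^{-1}c^*\to 0$), so your fallback is exactly right and nondegeneracy of the mixture is never needed when $C_1\neq C_2$.
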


\begin{proof}  We may assume that $k(n,m)=\min\{n,m\}$ and then omit the truncation parameter from the notation.
Let
\begin{equation*} 
  C_{\gamma,C_1,C_2}\,:=\,\gamma\,C_1\, + \,(1-\gamma)\,C_2. 
\end{equation*}
From \cite[Lemma 3.5]{Hopp} we obtain
that $\check{C}(\sigma_n)\toweak C$ if $\sigma_n$ converges to $C$ in the pattern frequency 
topology, hence it holds that $\check{C}_{m,n}$ converges weakly to $C_{\gamma,C_1,C_2}$ 
with probability one as $m,n\to\infty$. Let $W_{m,n}^{*(1)}$, $W_{m,n}^{*(2)}$ be defined in terms of $T_m^{*(1)}$, $T_{n}^{*(2)}$
as in~\eqref{eq:T2W}, i.e.,
\begin{align*}
  W_{m,n}^{*(1)}\,=\,m^{\nf{1}{2}} \left (T_{m,n}^{*(1)} - \check C_{m,n}^{\bS}\right ),
                  \quad W_{m,n}^{*(2)}\,=\,n^{\nf{1}{2}} \left (T_{m,n}^{*(2)} - \check C_{m,n}^{\bS}\right ).
\end{align*}
Notice that
\begin{equation}\label{eq:mix}
  W_{m,n}^{*}\,=\,\left (\frac{n}{m+n}\right )^{\nf{1}{2}} W_{m,n}^{*(1)}\, - \,\left (\frac{m}{m+n}\right )^{\nf{1}{2}} W_{m,n}^{*(2)}.
\end{equation}
Then Theorem~\ref{thm:patternCLT}, now with a weakly convergent
sequence of base copulas, implies that almost surely conditionally on the sequences
$\Pi^{(1)}_{m}$, $\Pi^{(2)}_{n}$,
\begin{equation*}
   \bigl(W_{m,n}^{*(1)}, W_{m,n}^{*(2)}\bigr)\; 
                 \todistr\;  \bigl(W_{C_{\gamma,C_1,C_2}}^{(1)},W_{C_{\gamma,C_1,C_2}}^{(2)}\bigr)
\end{equation*}
and, using~\eqref{eq:limsize} and \eqref{eq:mix}, 
\begin{equation*}
  W_{m,n}^*\; \todistr\, (1-\gamma)^{\nf{1}{2}}\, W_{C_{\gamma,C_1,C_2}}^{(1)}\,-\,\gamma^{\nf{1}{2}}\, W_{C_{\gamma,C_1,C_2}}^{(2)} \eqdistr\; W_{C_{\gamma,C_1,C_2}}
\end{equation*}
as $m,n\to\infty$. Thus, almost surely conditionally on the sequences
$\Pi^{(1)}_{m}$, $\Pi^{(2)}_{n}$,
\begin{align*}
{\rm CvM}_{m,n}\sts \;\todistr\, \|W_{C_{\gamma,C_1,C_2}}\|^2\quad\text{ as }m,n\to\infty.
\end{align*}  
From this we deduce that almost surely
 \begin{equation*}
   \left (\frac{mn}{m+n}\right )^{-1} c_{m,n;\alpha}^*\big (\Pi_{m}^{(1)},\Pi_{n}^{(2)}\big ) \rightarrow 0\quad\text{ as }m,n\to \infty.
\end{equation*}
Moreover, in the hypothesis case, with $C_1=C_2=C$, it holds that $W_{C_{\gamma,C_1,C_2}}= W_{C}$ and we
then also have that, almost surely conditionally on the sequences
$\Pi^{(1)}_{m}$, $\Pi^{(2)}_{n}$,
\begin{equation*}
{\rm CvM}_{m,n}\sts \; \todistr\, \|W_{C}\|^2 = {\rm CvM}\ts\quad\text{ as }m,n\to \infty.
\end{equation*}
Denoting by $c_{\alpha}$ the upper $\alpha$ quantile of ${\rm CvM}\ts$, we obtain 
$c_{m,n;\alpha}^*\big (\Pi_{m}^{(1)},\Pi_{n}^{(2)}\big )\rightarrow c_{\alpha}$ almost surely.  
Now the statements of the theorem follow as in the case of Theorem~\ref{thm:CvMGF}.
\end{proof}

\subsection{Cram\'er-von Mises type tests for symmetry}\label{subsec:CvMSY}
If the random vector $Z=(U,V)$ is such that $U$ and $V$ are uniformly
distributed on the unit interval  then the same
holds for $\bar Z:=(V,U)$, and the associated copulas are obviously related 
by $\bar C(x,y)=C(x,y)$, $0\le x,y\le 1$. We say that $C$ 
is \emph{symmetric} if $\bar C=C$. An obvious example is
the case where $U$ and~$V$ are independent.
Important families with this property are  
Archimedean copulas and the Farlie-Gumbel-Morgenstern copulas, see also 
Section~\ref{subsec:FGMtest} below. Nonparametric test based on empirical copulas for
this and other notions of symmetry are discussed in~\cite{JASAsymm}.

As in subsection \ref{subsec:CvMGF}, let $(\Pi_n)_{n\in\bN}$ be a sequence of random permutations generated by an unknown
copula $C$. In order to introduce a
pattern-based test for the symmetry hypothesis $H_0: \, C=\bar C$ we use the
elementary observation that, for all $n\in\bN$ and $\sigma\in\bS_n$, 
\begin{equation}\label{eq:conjugate}
   \Pi_n(Z_1,\dots,Z_n) = \sigma\  \Longleftrightarrow\ 
              \Pi_n(\bar Z_1,\dots,\bar Z_n) = \sigma^{-1}.
\end{equation}
In particular, $C^\bS(\sigma)=\bar C^\bS(\sigma^{-1})$, and  
Theorem~\ref{thm:patternCLT} implies that, with
$\bar T_n(\sigma):= T_n(\sigma^{-1})$, $\bar W_n(\sigma):= W_n(\sigma^{-1})$, and $\bar W_C(\sigma):=W_C(\sigma^{-1})$,
\begin{equation*}
  \bigl(W_n,\bar W_n\bigr)      \; \todistr\, (W_C,\bar W_C)
\end{equation*}
as $n\to\infty$. Hence, the processes  $W\sy_n:=W_n - \bar W_n$ converge in
distribution to $W_C\sy$ with $W\sy_C(\sigma):=W_C(\sigma)-W_C(\sigma^{-1})$
for all $\sigma\in\bS$.
The limit is again a centered Gaussian process, and bilinearity leads
to an expression of its covariance function in terms of the covariances of 
$W_C$ given in~\eqref{eq:cov}.

We may now proceed as in the previous two sections,  taking
\begin{equation*}
  {\rm CvM}\sy_{n}:=n\sum_{\sigma\in\bS_n^{\le}}p_\sigma\,|\sigma |^{-2} \left (T_n(\sigma) - T_n(\sigma^{-1})\right )^2
\end{equation*}
as test statistic. In the hypothesis case, $C^\bS = \bar C^\bS$,  ${\rm CvM}\sy_n = \| W\sy_n \|^2$, 
\begin{equation*}
  {\rm CvM}\sy_{n,k}\; \todistr\, \|W_C\sy\|^2,
\end{equation*}
and we may use the bootstrap to obtain critical values. We leave it to the reader to formulate 
a symmetry analogue of Theorem~\ref{thm:CvMTS} together with a truncation extension.

Finally, we mention that there are other notions of symmetry, for example
those based on $(U,V)\mapsto (U,1-V)$ or $(U,V)\mapsto (1-U,1-V)$ instead of
$(U,V)\mapsto (V,U)$; see~\cite{JASAsymm}. 

\subsection{Kolmogorov-Smirnov type tests}\label{subsec:KS}
The Cram\'er-von Mises tests in the preceding sections are closely connected 
to $\ell^2$-spaces. Here we  briefly consider Kolmogorov-Smirnov type tests, which 
are based on suprema and are therefore similarly related to $\ell^\infty$-spaces. 
We restrict ourselves to a rough outline of the untruncated case.

In contrast to $\ell^2(\bS)$ the space $\ell^\infty(\bS)$ of all bounded real functions on $\bS$ 
is not separable. The Hoffmann-J{\o}rgensen extension of distributional convergence 
makes it possible to work with the full $\ell^\infty$-space, see e.g.~\cite{VaartWellner}. 
Alternatively, as pointed out by a referee, we may work with the closed separable subspace
$c_0(\bS)$ that consists of all functions $x:\bS\to \bR$ that vanish at infinity in the 
sense that for all $\epsilon>0$ there exists a $k=k(\epsilon)<\infty$ such that 
$|x_\sigma|\le \epsilon$ for all $\sigma\in\bS$ with $|\sigma|\ge k$.
A central limit theorem for these  spaces can be obtained from Theorem~\ref{thm:patternCLT}:
Let $\Psi:\bH\to\ell^\infty(\bS)$ be defined by  $\Psi(x)=(p_\sigma^{\nf{1}{2}} x_\sigma)_{\sigma\in\bS}$ for all
$x=(x_\sigma)_{\sigma\in\bS}\in\bH$. This is easily seen to be a  bounded linear operator and, in fact, its range is contained in $c_0(\bS)$. Hence convergence of $\Psi(W_n)$, i.e.\ of $W_n$ in
$\ell^\infty(\bS)$ or $c_0(\bS)$,
follows with a suitable version of the continuous mapping theorem.

Let $\| \cdot \|_\infty$ be the supremum norm on $\ell^\infty(\bS)$. The Kolmogorov--Smirnov type goodness-of-fit test statistic is
\begin{equation*}
  {\rm KS}\gf_n\,=\ \sqrt{n} \max_{\sigma \in \bS^{\le}_ n} p_{\sigma}^{\nf{1}{2}}\,| \sigma |^{-1} \, | T_n(\sigma) - C_0^{\bS}(\sigma) |
  \, =\, \sup_{\sigma\in \bS}\, p_\sigma^{\nf{1}{2}}\,\bigl |W_n(\sigma)\bigr|
  \, =\, \| p_\cdot^{\nf{1}{2}}\, W_n(\cdot) \|_\infty,
\end{equation*}
with distributional limit
\begin{equation*}
         {\rm KS}\gf\,:=\,\|p_\cdot^{\nf{1}{2}}\,W_{C_0}(\,\cdot\,)\|_\infty\,
           =\,\sup_{\sigma\in \bS}\, p_\sigma^{\nf{1}{2}}\bigl|W_{C_0}(\sigma)\bigr|.
\end{equation*}
With $q_{n,\alpha}$ as upper $\alpha$ quantile of ${\rm KS}\gf_n$ when $C = C_0$ obtains,          
the hypothesis is rejected if ${\rm KS}\gf_n > q_{n,\alpha}$. 
This leads to the following Kolmogorov-Smirnov analogue of Theorem~\ref{thm:CvMTS}. For the 
asymptotics of the $\ell^2$-based tests certain properties of the limit distribution
of the test statistic under the hypotheses were important. 
The distribution function of $\,\|p_\cdot^{\nf{1}{2}}\,W_{C_0}(\,\cdot\,)\|_\infty$
has these properties as well; see \cite{BeranMillar} and \cite{Tsirelson}.

\begin{theorem}\label{thm:KS}
Suppose that $C_0$ satisfies condition~\eqref{eq:nondeg}. 
Then the Kolmogorov-Smirnov type goodness-of-fit
test asymptotically attains the given level in the sense that
\begin{equation}\label{eq:KScorrect}
     \lim_{n\to\infty} P_{C_0}\bigl({\rm KS}_n\gf\ge q_{n,\alpha}\bigr)\,=\, \alpha,
\end{equation}
and the test is consistent against the general alternative
in the sense that 
\begin{equation}\label{eq:KSconsistent}
     \lim_{n\to\infty} P_{C}\bigl({\rm KS}_n\gf\ge q_{n,\alpha}\bigr)\,=\, 1
           \quad\text{for all } C\neq C_0.
\end{equation}
\end{theorem}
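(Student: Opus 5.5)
The proof follows the pattern of Theorem~\ref{thm:CvMGF}, with the squared $\bH$-norm replaced by the map $x\mapsto\|\Psi(x)\|_\infty$. Here $\Psi(x)=(p_\sigma^{\nf{1}{2}}x_\sigma)_{\sigma\in\bS}$ is the bounded linear operator from $\bH$ into $c_0(\bS)$ introduced above. Since $\|\Psi(x)\|_\infty\le\|x\|$, this map is Lipschitz on $\bH$.

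\emph{Level.} Theorem~\ref{thm:patternCLT}, applied with the constant sequence $C_n\equiv C_0$, gives $W_n\todistr W_{C_0}$ in $\bH$. Note that ${\rm KS}\gf_n=\|\Psi(W_n)\|_\infty$, because $W_n(\sigma)=0$ for $|\sigma|>n$. The continuous mapping theorem therefore yields ${\rm KS}\gf_n\todistr{\rm KS}\gf=\|\Psi(W_{C_0})\|_\infty$.

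To pass from convergence in distribution to convergence of quantiles, I need the distribution function of ${\rm KS}\gf$ to be continuous and strictly increasing on $(0,\infty)$, or at least at its upper $\alpha$ quantile. I would obtain this from the cited results on suprema of Gaussian processes \cite{BeranMillar,Tsirelson}. Here $\Psi(W_{C_0})$ is a centered Gaussian element of the separable Banach space $c_0(\bS)$, and ${\rm KS}\gf$ is its norm, a supremum of the countable family $\pm p_\sigma^{\nf{1}{2}}W_{C_0}(\sigma)$. By Tsirelson's theorem the distribution of such a supremum is absolutely continuous away from its left endpoint. By the support theorem for Gaussian measures (Beran--Millar), its support is $[0,\infty)$ whenever the process is nondegenerate. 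This is exactly where \eqref{eq:nondeg} enters: some $W_{C_0}(\sigma)$ has positive variance, so the law is not concentrated at $0$. The law also puts positive mass near $0$, since $0$ lies in the support of a centered Gaussian measure.

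With these properties in hand, $q_{n,\alpha}\to q_\alpha$, the upper $\alpha$ quantile of ${\rm KS}\gf$. Then \eqref{eq:KScorrect} follows as in \eqref{eq:level}, using the continuity of the limit law at $q_\alpha$. I expect verifying these regularity properties to be the main obstacle, and I would rely on the cited references rather than reprove them.

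\emph{Consistency.} Fix $C\neq C_0$. Since $C$ is determined by $C^\bS$ (\cite[Lemma 5.1]{Hopp}), there is some $\sigma_0$ with $\delta:=|C^\bS(\sigma_0)-C_0^\bS(\sigma_0)|>0$. For $n\ge|\sigma_0|$ we have
\begin{equation*}
  {\rm KS}\gf_n\,\ge\,\sqrt n\, p_{\sigma_0}^{\nf{1}{2}}|\sigma_0|^{-1}\bigl|T_n(\sigma_0)-C_0^\bS(\sigma_0)\bigr|.
\end{equation*}
By \eqref{eq:asC}, $T_n(\sigma_0)\to C^\bS(\sigma_0)$ almost surely, so the right-hand side tends to infinity almost surely. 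Meanwhile $q_{n,\alpha}\to q_\alpha<\infty$ by the first part. Hence $P_C({\rm KS}\gf_n\ge q_{n,\alpha})\to1$, which is \eqref{eq:KSconsistent}.
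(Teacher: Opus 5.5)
Your proposal is correct and follows essentially the paper's own argument, which is only sketched there. The CLT is pushed through the bounded operator $\Psi$ by continuous mapping, and the continuity and strict monotonicity of the law of $\|\Psi(W_{C_0})\|_\infty$ are taken from the cited results of Beran--Millar and Tsirelson. The level and consistency statements then follow as in Theorem~\ref{thm:CvMGF}, with your single-pattern lower bound replacing the almost sure convergence of $n^{-1}{\rm CvM}\gf_{n}$ to a positive limit.

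One detail is worth making explicit: \eqref{eq:nondeg} also rules out an atom at $0$. Indeed $P(\|\Psi(W_{C_0})\|_\infty=0)\le P(W_{C_0}(\sigma)=0)=0$ for any $\sigma$ with $\rho_{C_0}(\sigma,\sigma)>0$, so $q_\alpha>0$ is a continuity point of the limit law.
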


Similarly we obtain a Kolmogorov-Smirnov type test for the
two-sample problem, together with an analogue of Theorem~\ref{thm:CvMTS}, and 
the bootstrap can again be used to obtain critical values. The same holds for
the symmetry situation.

\section{Tests for parametric families}\label{sec:parametric}
Let $\mathscr{C} = \{C_\theta:\, \theta\in\Theta\}$ be a parametric
family of two-dimensional copulas. Suppose we observe $\Pi_n$ for some $n\in\bN$,
where $\Pi_n$ is generated from $C_\theta$ with an unknown $\theta\in\Theta$.
For a given subset $\Theta_0$ of $\Theta$ we consider tests of the hypothesis
$\theta\in\Theta_0$  
that are based on linear pattern statistics. 
In Subsection \ref{subsec:FGMtest} we deal with
different tests of this type for the  family $\mathscr{C}$ of
Farlie-Gumbel-Morgenstern copulas and compare these  by their asymptotic slopes, which
determine the asymptotic power performance of the tests under local alternatives.
In Subsection \ref{subsec:delaytest},
$\mathscr{C}$ is the family of delay copulas with exponential delay distributions.
We treat a one-sided test for the parameter of the exponential
distribution that is based on the relative frequency~$I_n=t(\Pi_n,21)$ of inversions in $\Pi_n$. 
We compare the asymptotic performance of the test with
that of the uniformly most powerful test for the situation where observations of the delays
$D_1,\ldots,D_n$ are available, using the asymptotic relative efficiency concepts of
Pitman and Bahadur.

\subsection{Farlie-Gumbel-Morgenstern copulas}\label{subsec:FGMtest}
Let $\mathscr{C}$ be the family of Farlie-Gumbel-Morgenstern (FGM) copulas
\begin{equation}\label{eq:defFGM}
    C_\theta(u,v) = C_\theta^{\rm FGM}(u,v)\; =\; uv\, +\, \theta u v (1-u)(1-v), \quad 0\le u,v\le 1,
  \end{equation}
with parameter $\theta\in[-1,1]$; see~\cite{Farlie,Gumbel,Morgenstern}.
In the case $\theta=0$ the components of a two-dimensional
random vector with this distribution function are independent, they are 
positive quadrant dependent if $\theta >0$ and negative quadrant dependent
if $\theta<0$. A two-sided test of the hypothesis $H_0:\theta=0$ against the 
alternative $H_1: \theta\not=0$ appears as a test for independence under
the parametric assumption that the true distribution is from the FGM family.
The tests of $H_0:\theta= 0$ against $H_+: \theta > 0$ or $H_-: \theta < 0\,$ 
similarly apply in the context of quadrant dependency.

If $(U,V)\sim C_\theta$, meaning that the random vector $(U,V)$ has distribution 
function $C_\theta$,  then obviously  $(V,U)\sim C_\theta$, as already mentioned
in Section~\ref{subsec:CvMSY}. We then also have $(1-U,V)\sim C_{-\theta}$, 
$(U,1-V)\sim C_{-\theta}$,  and $(1-U,1-V)\sim C_\theta$.

Let $\pi\in\bS_n$ be the value of a random permutation $\Pi_n$ generated by  
$C_\theta$. Kendall's rank correlation test (Kendall's tau) is based on the 
test statistic $K_2(\Pi_n)$ with $K_2(\pi)=t(\pi,12)-t(\pi,21)$.
The analogue with concordant and discordant triples would suggest
the test statistic $K_3(\Pi_n)$ with $K_3(\pi):= t(\pi,123)-t(\pi,321)$.
Let  $\sigma_i,\,i=1,\ldots,6$, be the elements of $\bS_3$ in
lexicographic order. We consider  
tests that are based on general linear pattern statistics 
$L_a(\Pi_n):= \sum_{i=1}^6a_i\,t(\Pi_n,\sigma_i)$, with $a=(a_1,\ldots,a_6)\tr\in\bR^6$. 
For testing $H_0:\theta= 0$ against $H_+: \theta > 0$ the $L_a$ test 
at a given level $\alpha\in (0,1)$ rejects
the hypothesis if $L_a(\Pi_n)>c_{a;\alpha,n}$, where $c_{a;\alpha,n}$ denotes the upper $\alpha$
quantile of the distribution of $L_a(\Pi_n)$ in the case where the hypothesis is true.
A natural question is whether there are $L_a$ tests that perform 
better than Kendall's rank correlation test. 

We need some values of the functions $C^\bS_\theta$.
From~\eqref{eq:CSdef} we obtain for $\sigma\in\bS_3$
\begin{equation} \label{eq:Cfunc1FGM}
\begin{aligned}
   &C^\bS_\theta(123) = \frac{1}{6}+\frac{\theta}{12}+\frac{\theta^2}{100},
   \quad C^\bS_\theta(132) = \frac{1}{6}+\frac{\theta}{24}-\frac{\theta^2}{200},
   \quad C^\bS_\theta(213) = \frac{1}{6}+\frac{\theta}{24}-\frac{\theta^2}{200},\\
   &C^\bS_\theta(231) = \frac{1}{6}-\frac{\theta}{24}-\frac{\theta^2}{200},
   \quad C^\bS_\theta(312) = \frac{1}{6}-\frac{\theta}{24}-\frac{\theta^2}{200},
   \quad C^\bS_\theta(321) = \frac{1}{6}-\frac{\theta}{12}+\frac{\theta^2}{100}.
\end{aligned}
\end{equation}
Note that the symmetries of $\mathscr{C}$  lead to $C^\bS_\theta(ijk)=C^\bS_{-\theta}(kj i)$
for all $ijk\in\bS_3$.
For a linear pattern statistic,
$E_\theta L_a(\Pi_n)= \sum_{i=1}^6a_i\,C^\bS_\theta(\sigma_i)=:\mu_a(\theta)$,
and $E_\theta K_3(\Pi_n)=\theta/6$ is a special case.

For an asymptotic comparison we require the respective limit variances
under the hypothesis, and in order to be able to apply Theorem~\ref{thm:patternCLT} certain  
values of the function $C_0^{\bS,2}$ are needed. We can use the approach developed in~\cite{JNZ}
for the independence copula. 
In particular,  
\cite[Example 2.6]{JNZ} provides the asymptotic
covariance matrix $\Xi$ of the six-dimensional random vector 
$\sqrt{n}\bigl (t(\Pi_n,\sigma_i)-C_0^\bS(\sigma_i)\bigr)_{i=1}^6$ as
\begin{equation}\label{eq:defSigma0}
       \Xi\, := \, \frac{1}{400}\, \begin{pmatrix}
                               26 &12 &12 &-13 &-13 &-24 \\
                               12 &14 &-1 &-6 &-6 & -13  \\
                               12 &-1 &14 &-6 &-6 & -13  \\
                              -13 &-6 &-6 &14 &-1 & 12  \\
                              -13 &-6 &-6 &-1 &14 & 12   \\
                              -24 &-13 &-13 &12 &12 & 26 \end{pmatrix}.
\end{equation}
(In~\cite{JNZ} the authors use the norming $n^{|\sigma|}$ 
which accounts for a factor 36.)  For example, 
this implies that, under the hypothesis, $\sqrt{n}K_3(\Pi_n)$ is asymptotically normal with limiting
variance $1/4$.

To deal with the asymptotic performance of the $L_a$ test, fix $h\in [-1,1]$, and 
let the permutation $\Pi_n$ be generated from the copula $\tilde C_{h,n}:=C_{h/\sqrt{n}}\in \mathscr C$.
Clearly, $\tilde C_{h,n}$ converges to $C_0$ as $n\to\infty$. Theorem~\ref{thm:patternCLT} implies  
\begin{equation*}
\sqrt{n}\bigl(t(\Pi_n,\sigma_i)-\tilde C_{h,n}^\bS(\sigma_i)\bigr)_{i=1}^6
       \ \todistr\ N_6(0,\Xi).
\end{equation*}
The expressions obtained above for $C_\theta(\sigma_i)$ lead to
\begin{equation}\label{eq:defm}
\sqrt{n}\bigl(\tilde C_{h,n}^\bS(\sigma_i)- C_0^\bS(\sigma_i)\bigr)_{i=1}^6
       \; \rightarrow\; h\cdot m,\quad\text{with }
                  m:=\frac{1}{24}\, (2,1,1,-1,-1,-2)\tr.
\end{equation}
Consequently,
\begin{equation*}
\sqrt{n}\bigl(t(\Pi_n,\sigma_i)-C_0^\bS(\sigma_i)\bigr)_{i=1}^6
\ \todistr\ N_6(hm,\Xi).
\end{equation*}
Let $\mu_a:=\mu_a(0)$. By the above limit considerations, $L_a(\Pi_n)$ is asymptotically  
normal in the sense that
\begin{equation}\label{eq:asvar0}
    \sqrt{n}\bigl(L_a(\Pi_n)-\mu_a\bigr)\ \todistr\  N(h \,a\tr m, a\tr \Xi a).
\end{equation}
Suppose that $v(a):=a\tr \Xi a>0$ and that $a\tr m>0$. Let $z_\alpha$ be the upper $\alpha$ quantile of the standard
normal distribution. Then if the hypothesis is true
\begin{equation*}
\sqrt{n}\bigl(L_a(\Pi_n)-\mu_a\bigr)\ \todistr N\left (0,v(a)\right ),\quad 
c_{a;\alpha,n}=\mu_a + z_\alpha\sqrt{v(a)/n}+o(1/\sqrt{n}),
\end{equation*}
and the $L_a$ test is of asymptotic level
$\alpha$. As explained e.g.\ in~\cite[Chapter 14]{vdV} 
this test has \emph{asymptotic slope}
in $\theta=0$ 
given by $s(a):= a\tr m/\sqrt{v(a)}$ in the sense that, for $h>0$,
\begin{equation}\label{eq:aslocpower}
      \lim_{n\to\infty}\phi_n(h\, n^{-1/2}) \; = 
            \; 1- \Phi\bigl(z_\alpha-h \cdot s(a)\bigr).
\end{equation}
Here $\phi_n$ is the power function of the test and $\Phi$ denotes the standard normal
distribution function. For example, with
$a=(1,0,0,0,0,-1)\tr$, which corresponds to the above $K_3(\Pi_n)$, we obtain  $s(a)=1/3$.

What is the optimal linear combination if all patterns of length three are known?
With maximal slope as a criterion we have to 
maximize $s(a)$ on the set of vectors $a\in\bR^6$ with $v(a)>0$ and $a\tr m>0$.
The matrix $\Xi$ is positive semidefinite and has rank 4. 
Let $e_1,\ldots,e_6$ be an orthogonal set of eigenvectors 
for the eigenvalues  $\lambda_1>\lambda_2 >\lambda_3>\lambda_4>\lambda_5=\lambda_6=0$
of $\Xi$. Then, with $\eta_i=\eta_i(a):=a\tr e_i/\|e_i\|$,
\begin{equation}\label{eq:sl2}
    s(a)^2 = \frac{(a\tr m)^2}
              {\phantom{\big|}
                \lambda_1\eta_1^2 +\lambda_2\eta_2^2
              +\lambda_3 \eta_3^2 + \lambda_4 \eta_4^2},
\end{equation}
The eigenvalues and a specific set of orthogonal eigenvectors have been given 
in~\cite{JNZ}. With $e_1=(2,1,1,-1,-1,-2)\tr = 24\cdot m$, $e_5=(1,-1,-1,1,1,-1)\tr$ and $e_6=(1,1,1,1,1,1)\tr$ 
it is then clear that $a\tr m>0$ implies $v(a)>0$, and that  the set of vectors $a\in\bR^6$
that maximizes $s^2$ is equal to    
\begin{equation}\label{eq:Sopt}
 S(m,\Xi) := \bigl\{a=\eta_1 e_1+\eta_5e_5+\eta_6 e_6:\,
                                \eta_1 > 0,\eta_5,\eta_6\in\bR\}.
\end{equation}
Further, in view of $\lambda_1=3/16$, the maximum is equal to $1/9$, which is the square of the slope for
$K_3(\Pi_n)$ derived above. Indeed, with the corresponding $a = (1,0,0,0,0,-1)\tr=:a^*$
we get  
$a^* = \frac{1}{3}\, e_1 + \frac{1}{3}\, e_5\, \in\, S(m,\Xi)$,
which shows that the above heuristically motivated test statistic is optimal in
the sense discussed here. If we only count the ascending triples then
$a = (1,0,0,0,0,0)\tr =:a^\uparrow$ and $s(a^\uparrow)^2=\frac{25}{26\cdot 9}=\frac{25}{26}\hspace*{0.5mm} s(a^*)^2$ for the resulting 
test statistic  $L_{a^\uparrow}(\Pi_n)$. 

It can be shown that for tests $L_a$ and $L_b$ with  $a\tr m>0$ and $b\tr m >0$
the quotient $s(b)^2/s(a)^2$ is the limit of the ratio of the sample sizes
needed for the $L_a$ test and the $L_b$ test to achieve a given power, and thus is      
the Pitman asymptotic relative efficiency (ARE) of the $L_b$ test with respect to the $L_a$ test;
see~\cite[Chapter 14]{vdV} or~\cite[Section 13.2]{leh} for a thorough treatment of this concept,
and also Subsection \ref{subsec:delaytest} for a definition and derivation of this and other
asymptotic relative efficiencies in the special situation treated there.
 
From this point of view about $8\%$ 
more observations are needed in comparison to $K_3(\Pi_n)$ if we only count 
the ascending triplets. We summarize the above in the following result.
An analogous result for testing $\theta=0$ against $\theta<0$ is readily obtained 
on using the relation between $ C_{-\theta}$ and $ C_\theta$ given at the
beginning of this subsection. 

\begin{proposition}\label{prop:FGM3}
In  the  FGM family of copulas consider the tests based on $L_a,L_b$ for $\theta=0$ 
against $\theta>0$, with $a\tr m>0$, $b\tr m>0$.  
Then, with $s$ as in~\eqref{eq:sl2}, the Pitman ARE  
of $L_a$ with respect to $L_b$ is given by $\; {\rm eff}_{L_a,L_b}\pit=s(a)^2/s(b)^2$.
Further, within this family, $L_a$ maximizes the slope and thus the Pitman ARE 
with respect to the other members if and only if $a\in S(m,\Xi)$ with $S(m,\Xi)$ as in~\eqref{eq:Sopt}.
\end{proposition}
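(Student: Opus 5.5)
The plan has two parts: the Pitman efficiency formula, and the characterization of the maximizers of $s$.

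\emph{Efficiency formula.} Everything rests on the local power expansion \eqref{eq:aslocpower}. Under $\tilde C_{h,n}=C_{h/\sqrt n}$, Theorem~\ref{thm:patternCLT} (applied with the convergent sequence $\tilde C_{h,n}\toweak C_0$), together with \eqref{eq:defm}, gives the limit \eqref{eq:asvar0} for $\sqrt n(L_a(\Pi_n)-\mu_a)$. Combined with $c_{a;\alpha,n}=\mu_a+z_\alpha\sqrt{v(a)/n}+o(n^{-1/2})$, this yields $\phi_{a,n}(h n^{-1/2})\to 1-\Phi(z_\alpha-h\,s(a))$.

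Next I would fix a target power $\beta\in(\alpha,1)$ and a sequence of alternatives $\theta_\nu\downarrow0$. Let $n_a(\nu)$ and $n_b(\nu)$ be the minimal sample sizes for which the two tests reach power $\beta$ at $\theta_\nu$. Writing $\theta_\nu=h_\nu/\sqrt{n_a(\nu)}$, the expansion forces $h_\nu\,s(a)\to z_\alpha-z_\beta$. Likewise $\theta_\nu\sqrt{n_b(\nu)}\,s(b)\to z_\alpha-z_\beta$, and taking the quotient gives $n_b(\nu)/n_a(\nu)\to s(a)^2/s(b)^2$.

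The main technical obstacle is that \eqref{eq:aslocpower} is stated for fixed $h$, whereas here $h_\nu$ varies. I would handle this by monotonicity in $h$ and a sandwich argument: for any $h'<h<h''$ the limits of $\phi_n$ bracket the required power. This is the standard argument of \cite[Chapter 14]{vdV}, and I would cite it rather than redo it.

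\emph{Maximizers of $s$.} Write $a=\sum_i \eta_i e_i/\|e_i\|$ in the orthogonal eigenbasis of $\Xi$. Since $m=e_1/24$, we have $a\tr m=\eta_1\|e_1\|/24$. This depends only on $\eta_1$, so $a\tr m>0$ iff $\eta_1>0$, and then $v(a)\ge\lambda_1\eta_1^2>0$.

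It follows that
\[
s(a)^2=\frac{\eta_1^2\|e_1\|^2/576}{\lambda_1\eta_1^2+\lambda_2\eta_2^2+\lambda_3\eta_3^2+\lambda_4\eta_4^2}\le \frac{\|e_1\|^2}{576\,\lambda_1},
\]
with equality iff $\eta_2=\eta_3=\eta_4=0$, because $\lambda_2,\lambda_3,\lambda_4>0$. Together with $\eta_1>0$ and arbitrary $\eta_5,\eta_6$, this is exactly the set $S(m,\Xi)$ in \eqref{eq:Sopt}. With $\|e_1\|^2=12$ and $\lambda_1=3/16$ the bound equals $1/9$, consistent with the computation for $a^*$.

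The facts I will take from the text and \cite{JNZ} are that $e_1,e_5,e_6$ are eigenvectors of $\Xi$ for $\lambda_1,0,0$ and that $\lambda_2,\lambda_3,\lambda_4$ are positive. Both can be checked directly from \eqref{eq:defSigma0}: multiplying out $\Xi e_1$, $\Xi e_5$ and $\Xi e_6$, and using that the rank of $\Xi$ is $4$. Finally, since $s(b)$ is fixed when comparing $L_a$ against all $L_b$, maximizing ${\rm eff}\pit_{L_a,L_b}=s(a)^2/s(b)^2$ is equivalent to maximizing $s(a)$, which gives the stated characterization.
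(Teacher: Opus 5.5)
Your proposal is correct and follows essentially the same route as the paper: the local limit \eqref{eq:asvar0} via Theorem~\ref{thm:patternCLT} applied to $C_{h/\sqrt n}\toweak C_0$, then the Pitman ARE as the ratio of squared slopes by \cite[Chapter 14]{vdV}, then the orthogonal eigen-decomposition of $\Xi$ with $m=e_1/24$ and $\ker\Xi=\mathrm{span}(e_5,e_6)$, which gives the maximizers $S(m,\Xi)$ and the maximal value $1/9$. One correction to your sketch of the varying-$h_\nu$ step: the sandwich needs the power of $L_a$ to be monotone in $\theta$ for each fixed $n$, which is not established for general $a$ in the FGM family. It is not needed, though, because Theorem~\ref{thm:patternCLT} holds for every weakly convergent copula sequence, so it gives asymptotic normality of $\sqrt n\,(L_a(\Pi_n)-\mu_a(\theta_n))$ along every $\theta_n\downarrow 0$, and that is exactly the hypothesis of the version of \cite[Theorem 14.19]{vdV} that does not assume monotone power.
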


We sketch some related results and extensions, leaving the details to the reader.

First, it follows from~\eqref{eq:back}  that, with $a := \frac{1}{3}(3,1,1,-1,-1,-3)\tr$,
$K_2(\pi) = L_a(\pi)$ if $|\pi|\ge 3$, 
and~\eqref{eq:sl2} leads to $1/9$ again (indeed, $a=\frac{4}{9}e_1+\frac{1}{9}e_5$). This 
means that, as far as
$L_a$ tests with $a\tr m>0$ and Pitman efficiencies are concerned, there is 
no gain in using patterns of length three instead of length two if testing $\theta=0$ against $\theta > 0$. 

Secondly, the component ranks $(Q_n,R_n)$ for a sample of size $n$ from an FGM copula 
generate a parametric family $\mathscr{P} =\{P_\theta:\, -1\le \theta\le 1\}$ of distributions 
on $\bS_n\times\bS_n$.
We know from Section~\ref{sec:cop} that $\Pi_n= R_n\circ Q_n^{-1}$. Also, $\Pi_n$ is
a sufficient statistic for $\mathscr{P}$. 
Classical finite-sample methods lead to a locally most powerful rank test of exact size $\alpha$ for $H_0:\theta=0$ against
$H_1:\theta>0$, known as \emph{Spearman's one-sided rank correlation test}.
The test statistic, \emph{Spearman's rank
correlation}, can be expressed as $\frac{3}{n+1}\,K_2(\Pi_n)\, + \,\frac{n-2}{n+1}\,L_{\breve{a}}(\Pi_n)$
with $\breve{a}:=(1,1,1,-1,-1,-1)\tr = \frac{2}{3} e_1-\frac{1}{3}e_5\in S(m,\Xi)$. It follows from this
that the test has asymptotic slope $s(\breve{a})$ and Pitman ARE 1 with respect to each $L_a$ test 
with $a\in S(m,\Xi)$, especially Kendall's rank correlation test. In particular, only patterns of length three are needed.
We point out that the coincidence of the Pitman AREs of the rank correlation tests of Spearman and
Kendall for FGM as well as for various other parametric subclasses of alternatives is well-known; see, e.g. \cite{Pinelis}.
In \cite{GenestVerret} locally most powerful rank tests of independence for general copula models are developed, 
and with respect to these, Pitman AREs of competing rank tests of independence are derived.

Further we might use the tests in Proposition~\ref{prop:FGM3} 
in connection with the extended hypothesis $H_0:\theta\le 0$. 
Then for a given $\alpha\in (0,1)$ conditions on the vector $a$ that ensure that
\begin{equation}\label{eq:compoundFGM}
 \lim_{n\to\infty} P_\theta( L_a(\Pi_n) > c_{a;\alpha,n})=0\;\text{ for all }\theta<0,\text{ and }
  \lim_{n\to\infty} P_\theta(L_a(\Pi_n) > c_{a;\alpha,n})=1\;\text{ for all }\theta>0
\end{equation}
are of interest. In contrast to the efficiency considered above this is a global property.
The consistency of $U$-statistics, together with  
$E_\theta L_a(\Pi_n) = \sum_{i=1}^6 a_iC^\bS_\theta(\sigma_i)=\mu_a(\theta)$
and the values of $C^\bS_\theta$ given in~\eqref{eq:Cfunc1FGM}, imply 
that~\eqref{eq:compoundFGM} would follow if $\mu_a(\theta)>\mu_a(0)$ for $\theta>0$ and
$\mu_a(\theta)<\mu_a(0)$ for $\theta<0$, where 
$\mu_a(\theta)=a\tr e+\theta\, a\tr m +\frac{\theta^2}{200}\,a\tr c$,
with $e=(1,1,\ldots,1)\tr$ and $c=(2,-1,-1,-1,-1,2)\tr$. In view of  $c\tr e_5=c\tr e_6=0$ this holds
for all elements of $S(m,\Xi)$.

Finally, for the two-sided problem we can choose some $a\in \bR^6$ with
$m\tr a\neq 0$ and consider the test based on the test statistic
$M_a(\Pi_n):=\bigl |L_a(\Pi_n)-\mu_a\bigr|$ that rejects the the hypothesis if
$M_a(\Pi_n)>d_{a;\alpha,n}$, with $d_{a;\alpha,n}$ as upper $\alpha$ quantile of $M_a(\Pi_n)$
in the case where the hypothesis is true. 
Then an analogue  
of~\eqref{eq:aslocpower} can be obtained, and for two competing tests of this type, 
say with $a=a_1$ and $a=a_2$, the Pitman ARE of the latter with respect to the former 
may be defined as the ratio $s(a_2)^2/s(a_1)^2$; see~\cite[p.286--287]{WiMuF}.
However,  the assumption $m\tr a\neq 0$ 
excludes linear pattern statistics that may be of interest in the two-sided context. In the present
setting $m\tr a = 0$ implies that the vector $a$ is a linear combination of $e_5$ and $e_6$,
where $e_6$ leads to $L_{e_6}(\Pi_n)\equiv 1$. We then also have $v(a)=0$, so that the kernel
of the associated $U$-statistics is degenerate. Such kernels are outside the asymptotic normality framework
considered here.
For $a=e_5$ the (non-normal) limit distribution of
the test statistic is given in \cite[p.\,128]{JNZ}. This aspect is also important in the context of the BDY test
that is included in the simulation study of Subsection \ref{subsec:sim}. We refer to~\cite{NWD} for the limiting
null distribution of the test statistic, and to \cite{DDB} for the power performance under contiguous alternatives.

\subsection{Delay copulas with exponential delay distributions}\label{subsec:delaytest}
In this section 
$\mathscr{C}$ is the family of delay copulas $C_\theta=C(G_\theta)$ with exponential delay 
distributions ${\rm Exp}(\theta)$, $\theta>0$, so that $G_\theta(x)=1-e^{-\theta x}$, $x\ge 0$.
For a given $\theta_0>0$ 
we consider testing the hypothesis $H_0:\theta\ge \theta_0$ against the alternative $H_{-}:\theta<\theta_0$.
The number $I_n=t(\Pi_n,21)$ of inversions in $\Pi_n$ is an unbiased and consistent
estimator of the probability  
$\phi_I(\theta) :=  P_\theta(\Pi_{2}=21) = (e^{-\theta} - 1 +\theta)/\theta^2$,
where $\phi_I$ is continuous and strictly decreasing with negative derivative
$\phi_I^\prime(\theta)=-\bigl((2+\theta)e^{-\theta} - (2-\theta)\bigr)/\theta^3$,
$\theta> 0$, and limits $\lim_{\theta\to 0}\phi_I(\theta)=1/2$,
$\lim_{\theta\to\infty}\phi_I(\theta)=0$.  Thus the testing problem 
can equivalently be expressed as testing $\phi_I(\theta)\le \phi_I(\theta_0)$ 
against $\phi_I(\theta)>\phi_I(\theta_0)$.
For a given significance level $\alpha\in (0,1)$, we consider the \emph{$I$-test}
that rejects the hypothesis if $I_n > i_{n,\alpha}$, with
$i_{n,\alpha}=\inf\{t\in \bR : P_{\theta_0}(I_n > t )\le \alpha\}$ as the upper 
$\alpha$ quantile of the distribution of $I_n$ if $\theta=\theta_0$. Using the fact that 
$\{\Exp(\theta):\theta>0\}$ is a scale family of distributions it is not 
difficult to show that $\theta \mapsto P_{\theta}(I_n > t)$ is monotone non-increasing for all
$t\in\bR$, hence the $I$-test has a monotone non-increasing power function and the procedure keeps 
the given level $\alpha$ on the full hypothesis $\theta\ge \theta_0$.

If observations of the delays $D_1,\ldots,D_n$ are available, then 
there exists a uniformly most powerful test at level $\alpha$ for the testing problem.
This test, which we call the \emph{$D$-test}, rejects the hypothesis if 
$\bar{D}_n:=\frac{1}{n}\sum_{i=1}^nD_i \,>\, d_{n,\alpha}$, 
where $d_{n,\alpha}$ is the upper $\alpha$ 
quantile of the gamma distribution $G(n,n\theta_0)$ 
with density $n^n\theta_0^n x^{n-1}\,e^{-n\theta_0 x}/\Gamma(n)$, $x> 0$. Its power function 
$\theta \mapsto P_\theta (\bar{D}_n > d_{n,\alpha})$ is strictly decreasing. 

We now aim to compare the asymptotic performances of these tests, 
using the asymptotic relative efficiency (ARE) concepts introduced  by
Pitman and Bahadur respectively. We give a brief description of
these standard approaches on the basis of the testing problem posed
here. For a general, detailed discussion we refer to~\cite{Bahadur},~\cite{Nikitin} and~\cite{vdV}. 

Let $I=(I_n)_{n\in\bN}$ and $D=(\bar{D}_n)_{n\in\bN}$ be the sequences of test statistics
$I_n$ and $\bar{D}_n$, where the subscript $n$ refers to the size of the data.
ARE enables the comparison of the sequences of tests associated with $I$
and $D$. For $\alpha\in(0,1)$, $\beta\in(\alpha,1)$ and $\theta<\theta_0$
let $N_I(\alpha,\beta,\theta)$ be the smallest integer $m$ such that, for each $n\ge m$, 
the test rejecting the hypothesis if $I_n > i_{n,\alpha}$ has power at least $\beta$ at the alternative
$\theta$, and let $N_D$ be similarly defined. Then the Pitman ARE and the Bahadur ARE of $I$
with respect to $D$ are given by 
\begin{equation*}
\lim_{\theta\to \theta_0} 
             \frac{N_D(\alpha,\beta,\theta)}{N_I(\alpha,\beta,\theta)} \ \text{ and }\ 
    \lim_{\alpha\downarrow 0} \frac{N_D(\alpha,\beta,\theta)}{N_I(\alpha,\beta,\theta)}
\end{equation*}
respectively, provided that the limits exist.
In the first of these, the alternative approaches the hypothesis whereas in the second 
the alternative remains fixed, but we get a local version of the latter by letting $\theta$ tend to~$\theta_0$.

Beginning with the Pitman efficiency we introduce the functions
$\phi_D(\theta):=E_\theta(D_1)=1/\theta$ and 
$v_D(\theta):=\var_\theta(D_1)=1/\theta^2$. The first of these is differentiable 
at $\theta_0$ with $\phi_D^\prime(\theta_0)=-1/\theta_0^2<0$, 
the second is continuous and  positive at $\theta_0$, with $v_D(\theta_0)=1/\theta_0^2$.
Hence the  Lindeberg--Feller central limit theorem gives
\begin{equation*}
    \cL\left (\sqrt{n}\left (\bar{D}_n - \phi_D(\theta_n)\right )\big |\theta_n\right )
         \, \toweak  N\bigl(0,v_D(\theta_0)\bigr)
\end{equation*}
whenever $\theta_n\to\theta_0$. For the frequencies underlying the $I$-test Theorem~\ref{thm:patternCLT}
leads to asymptotic normality  with variance $v_I(\theta_0)$,
where
\begin{equation}\label{eq:var:Kn}
        v_I(\theta) \; :=\; \frac{2}{3\theta^4}\bigl(2\theta^2-3\theta-6
                            \,+\, 2(3\theta^2+2\theta+6)e^{-\theta}\, 
                            -\, (\theta+6)e^{-2\theta}\bigr);
\end{equation}
see \cite[Proof of Theorem 10]{BaGrDelDat}.                          
We now apply ~\cite[Theorem 14.19]{vdV} to obtain 
that the Pitman ARE  of the $I$-test with respect to the $D$-test is given by
\begin{equation*}
  {\rm eff}_{I,D}\pit\,=\,\frac{\phi_I^\prime(\theta_0)^2 \,v_D(\theta_0)}
                                                      {\phi_D^\prime(\theta_0)^2\, v_I(\theta_0)}
                                \,=\,   \frac{\theta_0^2 \,\phi_I^\prime(\theta_0)^2}{v_I(\theta_0)}.
\end{equation*}  

We approach the  Bahadur efficiency via the  
\emph{Bahadur exact slopes} of the individual  sequences of test statistics under 
consideration. As the functions $\theta\to P_\theta(\bar{D}_n \ge t)$ and
$\theta\to P_\theta(I_n \ge t)$ are both monotone non-increasing for all $t\in\bR$ and $n\in\bN$,  
the \emph{levels attained} by the $D$-test and the $I$-test are
\begin{equation*}
  L_{D,n}:= P_{\theta_0}(\bar{D}_n \ge t)|_{t=\bar{D}_n}\quad\text{and}
         \quad L_{I,n}:= P_{\theta_0}(I_n \ge t)|_{t=I_n}.
\end{equation*}
For $\theta < \theta_0$, the Bahadur exact slopes of the $D$- and 
$I$-sequences are the limits in $P_\theta$-probability, if they exist, 
of~\,$-\frac{2}{n}\log L_{D,n}$~\,and~\,$-\frac{2}{n}\log L_{I,n}$~\,respectively.

By the Cram\'er--Chernoff theorem, see, e.g.~\cite[Proposition 14.23]{vdV}, we have
\begin{equation*}\label{eq:G:Cramer}
  - \lim_{n\to\infty}\,\frac{1}{n} \log P_{\theta_0}(\bar{D}_n \ge t)\, 
                                     = \, \theta_0\,t - 1 - \log\,(\theta_0\,t) \quad\text{for all}~t>1/\theta_0.
\end{equation*}
For $\theta<\theta_0$ we use the weak law of large numbers to obtain that  
$\bar{D}_n \to  1/\theta$ in $P_\theta$-probability. 
Together with  the continuity of the function~$u\mapsto u - 1 - \log u$ on the positive half-line
it now follows that, for all $\theta < \theta_0$,
\begin{equation*}
    - \lim_{n\to\infty}\frac{1}{n}\log L_{D,n} \,
                   =\,\frac{\theta_0}{\theta} - 1 - \log \frac{\theta_0}{\theta}
                   =: c_D(\theta)
\end{equation*}
in $P_\theta$-probability; see also \cite[Theorem 7.2]{Bahadur} or \cite[Theorem 1.2.2]{Nikitin}.

In order to arrive at the corresponding limit for the $I$-sequence we use~\cite[Theorem 2.3]{NikiPoni} 
to obtain an $0<\epsilon<\frac{1}{2}-\phi_I(\theta_0)$ and an analytic function
$b:(-\epsilon,\epsilon)\longrightarrow \bR$, represented by the powers series
$b(t)=\sum_{j=2}^\infty  b_ j t^j$, $ t\in (-\epsilon,\epsilon)$, 
with $b_2=\frac{1}{2 v_I(\theta_0)}$ such that
\begin{equation*}
  - \lim_{n\to\infty}\,\frac{1}{n}\log P_{\theta_0}(I_n-\phi_I(\theta_0) \ge  t + o(1) )\,
             =\,\frac{1}{2 v_I(\theta_0)} t^2 + \sum_{j=3}^\infty b_j t^j
                  \quad\text{for all}~t\in (0,\epsilon).
\end{equation*}
Together with  $I_n \to \phi_I(\theta)$ in $P_\theta$-probability 
for all~$\theta<\theta_0$ we get that for 
$\phi_I^{-1}\left (\phi_I(\theta_0)+\epsilon\right ) < \theta < \theta_0$,
\begin{align*}
  -  \lim_{n\to\infty} \frac{1}{n}\log L_{I,n} = c_I(\theta)\quad\text{in } P_\theta \text{-probability},
\end{align*}
with
\begin{align*}
   c_I(\theta):= b\bigl(\phi_I(\theta)-\phi_I(\theta_0)\bigr)\ 
                           =\ \frac{\left (\phi_I(\theta)-\phi_I(\theta_0)\right )^2}{2v_I(\theta_0)}
                                    + \sum_{j=3}^\infty b_j\bigl(\phi_I(\theta)-\phi_I(\theta_0)\bigr)^j.
\end{align*}
The relation of $N_D(\alpha,\beta,\theta)$ and $N_I(\alpha,\beta,\theta)$ 
to $c_D(\theta)$ and $c_I(\theta)$ is as follows. 
Obviously, $L_{D,n}<\alpha$ if and only if $\bar{D}_n>d_{n,\alpha}$. Likewise,
with $S_n:=\{s\in\bR: P_{\theta_0}(I_n=s)>0\}$ the finite support of the distribution $\mathcal{L}(I_n|\theta_0)$ we have that
$i_{n,\alpha}\in S_n$, and that 
$L_{I,n}\le \alpha$ if and only if $I_n>i_{n,\alpha}$. Then arguing as in
\cite[Proof of Theorem 7.1]{Bahadur} or \cite[Proof of Theorem 14.22]{vdV}
we obtain  
\begin{equation*}
 N_D(\alpha,\beta,\theta) \sim - \frac{\log \alpha}{c_D(\theta)},\quad
 N_I(\alpha,\beta,\theta) \sim - \frac{\log \alpha}{c_I(\theta)}\quad \text{as } \alpha\downarrow 0,
\end{equation*}  
and deduce from this that
the Bahadur asymptotic relative efficiency of the $I$-test with respect to the $D$-test at any $\theta$
with $\phi_I^{-1}\bigl(\phi_I(\theta_0)+\epsilon\bigr)< \theta <\theta_0\,$
is given by $ {\rm eff}_{I,D}\ba(\theta)=\frac{c_I(\theta)}{c_D(\theta)} $.
Using
\begin{align*}
    \frac{\theta_0}{\theta} -1 - \log \frac{\theta_0}{\theta}       
                  \ &=\ \frac{1}{2\theta_0^2} (\theta -\theta_0)^2+o\bigl((\theta -\theta_0)^2\bigr),\\
    \phi_I(\theta)-\phi_I(\theta_0)
                   \ &=\ \phi_I^\prime(\theta_0) (\theta -\theta_0)+o(\theta -\theta_0)
\end{align*}  
as $\theta\to \theta_0$
%
together with the continuity of $\theta\mapsto v_I(\theta)$, we finally obtain
\begin{equation*}
  {\rm eff}_{I,D}\ba\,=\,\frac{\theta_0^2\,\phi_I^\prime(\theta_0)^2}{v_I(\theta_0)}.
\end{equation*}
Taken together this proves the following result.

\begin{proposition}\label{prop:delexp} 
Let $\theta_0>0$ be given and consider the problem of testing $H_0: \theta\ge\theta_0$ 
against $H_-:\theta<\theta_0$ 
in the delay model with exponential delay distribution $\Exp(\theta)$.
In this situation, the Pitman asymptotic relative efficiency
and the local Bahadur asymptotic relative efficiency of the $I$-test with respect to the $D$-test are equal, 
and  given by $\theta_0^2 \phi_I'(\theta_0)^2/v_I(\theta_0)$.
\end{proposition}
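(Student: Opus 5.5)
The plan is to treat the two efficiencies separately and show that each equals $\theta_0^2\phi_I'(\theta_0)^2/v_I(\theta_0)$. Both reduce to the ingredients assembled just before the statement. For both statistics, the parametrisation is chosen so that the alternative $\theta<\theta_0$ corresponds to large values of the statistic. Indeed, $\phi_D(\theta)=1/\theta$ and $\phi_I$ are both strictly decreasing, so the derivatives enter only through their squares.

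For the Pitman part I would verify the hypotheses of \cite[Theorem 14.19]{vdV} for both sequences.

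\textbf{$D$-sequence.} Uniform asymptotic normality along $\theta_n\to\theta_0$ follows from Lindeberg--Feller, since $\bar D_n$ is an average of i.i.d.\ exponentials with continuous moments.

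\textbf{$I$-sequence.} I would apply Theorem~\ref{thm:patternCLT} with the convergent sequence of base copulas $C_n=C(G_{\theta_n})$. These converge weakly to $C(G_{\theta_0})$: this follows from the integral representation of $C(G)$, together with $G_{\theta_n}\to G_{\theta_0}$ pointwise and continuity of the departure distribution function $F$. Reading off the coordinate $\sigma=21$ gives
\begin{equation*}
\sqrt n\,\bigl(I_n-\phi_I(\theta_n)\bigr)\todistr N\bigl(0,4\rho_{C_{\theta_0}}(21,21)\bigr),
\end{equation*}
where the factor $4$ compensates for the scaling $|\sigma|^{-1}$ in \eqref{eq:defW}. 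The limit variance is identified with $v_I(\theta_0)$ via \cite{BaGrDelDat}. It is positive by Proposition~\ref{prop:delay_unique}, because $\rho_C(21,21)=\rho_C(12,12)$.

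Both mean functions are differentiable at $\theta_0$ with nonzero derivative. The theorem then gives the Pitman ARE as the ratio of squared slopes $\phi'(\theta_0)^2/v(\theta_0)$, which simplifies to the stated expression because $\phi_D'(\theta_0)^2/v_D(\theta_0)=1/\theta_0^2$.

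For the Bahadur part I would compute the exact slopes.

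\textbf{Slope of the $D$-sequence.} Cram\'er--Chernoff for the gamma family, combined with the weak law of large numbers and the continuity argument of \cite[Theorem 7.2]{Bahadur}, gives $c_D(\theta)=\theta_0/\theta-1-\log(\theta_0/\theta)$.

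\textbf{Slope of the $I$-sequence.} The large deviation result for nondegenerate $U$-statistics with bounded kernels, \cite[Theorem 2.3]{NikiPoni}, supplies a local rate function $b(t)=t^2/(2v_I(\theta_0))+O(t^3)$ on some interval $(0,\epsilon)$. I would restrict to $\theta$ close enough to $\theta_0$ that $\phi_I(\theta)-\phi_I(\theta_0)<\epsilon$. Then consistency $I_n\to\phi_I(\theta)$, together with continuity of $b$, gives $c_I(\theta)=b(\phi_I(\theta)-\phi_I(\theta_0))$. One technical point is that $I_n$ is lattice-valued; this is handled by using $L_{I,n}\le\alpha\iff I_n>i_{n,\alpha}$ with $i_{n,\alpha}$ in the support.

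\textbf{Conversion to sample sizes.} Following \cite[Theorem 7.1]{Bahadur} yields $N(\alpha,\beta,\theta)\sim-\log\alpha/c(\theta)$ as $\alpha\downarrow0$, so the Bahadur ARE equals $c_I(\theta)/c_D(\theta)$.

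\textbf{Local limit.} Expanding both numerator and denominator to second order in $\theta-\theta_0$ gives
\begin{equation*}
\lim_{\theta\to\theta_0}\frac{c_I(\theta)}{c_D(\theta)}
=\frac{\phi_I'(\theta_0)^2/(2v_I(\theta_0))}{1/(2\theta_0^2)}
=\frac{\theta_0^2\,\phi_I'(\theta_0)^2}{v_I(\theta_0)},
\end{equation*}
which matches the Pitman value.

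I expect the main obstacle to be the $I$-slope. Justifying the $U$-statistic large-deviation expansion requires a nondegenerate kernel and the small-deviation restriction. Passing from the large-deviation probability at a random threshold to the limit in $P_\theta$-probability also needs care, because $b$ is only known locally. I would handle this by sandwiching $I_n$ between $\phi_I(\theta)\pm\delta$ with high probability and using monotonicity of $t\mapsto P_{\theta_0}(I_n\ge t)$.
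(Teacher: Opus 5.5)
Your proposal is correct and follows essentially the same route as the paper. The Pitman ARE comes from \cite[Theorem 14.19]{vdV}, with Lindeberg--Feller for $\bar D_n$ and Theorem~\ref{thm:patternCLT} for $I_n$. The local Bahadur ARE comes from the exact slopes $c_D$ (Cram\'er--Chernoff) and $c_I=b(\phi_I(\theta)-\phi_I(\theta_0))$ (from \cite[Theorem 2.3]{NikiPoni}), followed by second-order expansion as $\theta\to\theta_0$. Your extra checks make explicit points the paper leaves implicit: the varying base copulas $C(G_{\theta_n})\toweak C(G_{\theta_0})$, the factor $4$ from the $|\sigma|^{-1}$ scaling, and positivity of $v_I(\theta_0)$ via $\rho_C(21,21)=\rho_C(12,12)$ together with the proof of Proposition~\ref{prop:delay_unique}.
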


A third concept, \emph{local Hodges-Lehmann ARE}, can similarly be analyzed and leads to the same value
in the situation considered here. 
The (lengthy) proof requires some additional technical arguments and will be part of a separate,
more general project.  

Also, similar statements can be derived for the related one sided testing problem
$H_0:\theta \le \theta_0$ against $H_+:\theta >\theta_0$, and the 
two-sided testing problem $H_0:\theta = \theta_0$ against $H_1:\theta \neq \theta_0$. 
Finally, we refer to \cite{KallenbergLedwina}, \cite{Kourouklis} and \cite{Wieand} for general results
on the coincidence of  different efficiency approaches.

\section{Implementation and simulations}\label{sec:implementation}

In this section we consider computational aspects, and
we provide an experimental comparison of our procedures with traditional tests
in the important special case of testing for independence.

\subsection{Exact enumeration and Monte Carlo approximation}\label{subsec:enumerationMC}

Finding the number of occurrences of a given pattern of length $k$ in a permutation
of size $n$ would require about $n^k$ steps in a naive procedure.
The search for faster algorithms has brought forth a variety of different
techniques. In particular, in \cite{EZLeng} the concept of corner trees,
together with a specific binary tree data structure, is used to
obtain counting algorithms for patterns of length $k=3$ that are roughly linear in $n$,
and for $k=4$ an algorithm with rough order $n^{3/2}$ is given. Further, there are theoretical 
considerations that lead to lower bounds that rule out fast algorithms for  $k$ 
and $n$.

This problem also arises in the (more general) context of $U$-statistics, where for a kernel
$h$ of order $k$ and a data set of size $n$ the value of $h$ is required for all 
$k$-element subsets of the data set. The use of a large number $N$ of randomly chosen
subsets as an alternative to exact enumeration has been investigated in~\cite{JansonMC},
with the remarkable result that, in the non-degenerate case of present interest
and for each fixed $k$, there ``is no point in taking $N$ much larger than~$n$''.  

\subsection{A simulation study for the case of testing for independence} \label{subsec:sim}

Based on the independent variables $Z_i=(X_i,Y_i)$, $i=1,\ldots,n$, 
we consider testing independence of $X_i$ and $Y_i$ against the 
general alternative that $X_i$ and $Y_i$ are not independent. With $C_0$
the independence copula, i.e. $C_0(u,v)\,=\,uv$ for all $u,v\in [0,1]$, this leads to  
the goodness-of-fit problem discussed in Subsection \ref{subsec:CvMGF}.
Clearly for $C=C_0$ the nondegeneracy  condition \eqref{eq:nondeg} is satisfied, hence the
pattern-based Cram\'er--von Mises and the Kolmogorov--Smirnov test are both consistent.
This even holds for the truncated versions with truncation parameter
$k_n\equiv 4$, with test statistics ${\rm CvM}_{n,4}\gf$ or ${\rm KS}_{n,4}\gf$,
as $C^\bS(\sigma) = 1/4!$ for all $\sigma\in\bS_4$
if and only if $C$ is the independence copula; see~\cite[Proposition 9]{Yana}. 
This result further  implies that the corresponding tests 
that use patterns of length four only, with test statistics
${\rm CvM^*}_{\hspace*{-1.5mm}n,4}\gf
       :=n\sum_{\sigma\in\bS_4} \left (\,T_n(\sigma) - \nf{1}{24}\,\right )^2$ 
and
${\rm KS^*}_{\hspace*{-1.5mm}n,4}\gf
       :=\sqrt{n} \max_{\sigma\in\bS_4}\, \left |\,T_n(\sigma) - \nf{1}{24}\,\right |$,
are also consistent.  

\begin{table}[!]
  \caption{Empirical power values of ${\rm CvM}_{n,4}\gf$, ${\rm KS}_{n,4}\gf$, ${\rm HBKR}_n^*$
    (first line in each row), and empirical power values of
    ${\rm CvM^*}_{\hspace*{-1.5mm}n,4}\gf$, ${\rm KS^*}_{\hspace*{-1.5mm}n,4}\gf$, ${\rm BDY}_n$
    (second line in each row) for FGM  and Clayton alternatives}
    \label{tab:eff}
  \vspace{1mm}
  \begin{tabular}{cccccccccccc}
    \noalign{\vspace{1mm}}
                                   & \multicolumn{3}{c}{${\rm CvM}_{n,4}\gf/{\rm CvM^*}_{\hspace*{-1.1mm}n,4}\gf$} && \multicolumn{3}{c}{${\rm KS}_{n,4}\gf/{\rm KS^*}_{\hspace*{-1.1mm}n,4}\gf$} && \multicolumn{3}{c}{${\rm HBKR}_n^*/{\rm BDY}_n$} \\
    \noalign{\vspace{1mm}}
      $\alpha$                           & 0.1 & 0.05 & 0.025 && 0.1 & 0.05 & 0.025 && 0.1 & 0.05 & 0.025 \\
    \noalign{\vspace{1mm}}
    \hline
    \noalign{\vspace{1.5mm}}
    \multicolumn{12}{c}{$=\joinrel=\joinrel=\joinrel=$\ \ FGM\ \ $=\joinrel=\joinrel=\joinrel=$} \\
    \noalign{\vspace{1mm}}
    $\theta = \nf{1}{4}$ & \mc{0.15}{0.15} &\mc{0.08}{0.08} & \mc{0.05}{0.05} && \mc{0.15}{0.14} & 
               \mc{0.08}{0.08} & \mc{0.05}{0.04} && \mc{0.14}{0.15} & \mc{0.08}{0.08} & \mc{0.05}{0.05} \\
    \noalign{\vspace{2mm}}
    $\theta = \nf{1}{2}$  & \mc{0.30}{0.29} & \mc{0.20}{0.19} & \mc{0.13}{0.12} && \mc{0.30}{0.26} 
    & \mc{0.20}{0.16} & \mc{0.13}{0.10} && \mc{0.28}{0.29} & \mc{0.19}{0.20} & \mc{0.12}{0.13} \\
    \noalign{\vspace{2mm}}
    $\theta = 1$          & \mc{0.75}{0.72} & \mc{0.64}{0.61} & \mc{0.52}{0.50} && \mc{0.75}{0.65} 
    & \mc{0.63}{0.52} & \mc{0.52}{0.40} && \mc{0.71}{0.73} & \mc{0.59}{0.62} & \mc{0.47}{0.50} \\            
    \noalign{\vspace{1.5mm}}
 \multicolumn{12}{c}{$=\joinrel=\joinrel=\joinrel=$\ \  Clayton\ \ $=\joinrel=\joinrel=\joinrel=$} \\
    \noalign{\vspace{1mm}}
      $\kappa = -\nf{1}{4}$  & \mc{0.44}{0.44} & \mc{0.32}{0.32} & \mc{0.22}{0.22} && \mc{0.44}{0.38} &
           \mc{0.32}{0.26}  & \mc{0.22}{0.18} && \mc{0.41}{0.41} & \mc{0.29}{0.29} & \mc{0.19}{0.20} \\
   \noalign{\vspace{2mm}}
      $\kappa = \nf{1}{4}$  & \mc{0.31}{0.32} & \mc{0.21}{0.22} & \mc{0.14}{0.15} && \mc{0.30}{0.30} 
            & \mc{0.20}{0.20} & \mc{0.14}{0.13} && \mc{0.28}{0.29} & \mc{0.19}{0.19} & \mc{0.12}{0.12} \\
    \noalign{\vspace{2mm}}
      $\kappa = \nf{1}{2}$  & \mc{0.66}{0.68} & \mc{0.54}{0.57} & \mc{0.42}{0.46} && \mc{0.66}{0.65} & 
               \mc{0.53}{0.54} & \mc{0.42}{0.43} && \mc{0.62}{0.62} & \mc{0.50}{0.50} & \mc{0.38}{0.39}         
  \end{tabular}

  \medskip  
 (a) Sample size $n=50$

  \vspace{0.3cm} 
  \begin{tabular}{ccccccccccccc}
    \noalign{\vspace{1mm}} 
          & \multicolumn{3}{c}{${\rm CvM}_{n,4}\gf/{\rm CvM^*}_{\hspace*{-1.1mm}n,4}\gf$} && \multicolumn{3}{c}{${\rm KS}_{n,4}\gf/{\rm KS^*}_{\hspace*{-1.1mm}n,4}\gf$} && \multicolumn{3}{c}{${\rm HBKR}_n^*/{\rm BDY}_n$} \\
    \noalign{\vspace{1mm}}
      $\alpha$                           & 0.1 & 0.05 & 0.025 && 0.1 & 0.05 & 0.025 && 0.1 & 0.05 & 0.025 \\
    \noalign{\vspace{1mm}}
    \hline 
    \noalign{\vspace{1.5mm}}
    \multicolumn{12}{c}{$=\joinrel=\joinrel=\joinrel=$\ \ FGM\ \ $=\joinrel=\joinrel=\joinrel=$} \\
    \noalign{\vspace{1mm}}
   $\theta = \nf{1}{4}$ & \mc{0.21}{0.20} & \mc{0.13}{0.12} & \mc{0.08}{0.08} && \mc{0.21}{0.18} &
 \mc{0.13}{0.11} & \mc{0.08}{0.07} && \mc{0.19}{0.20} & \mc{0.12}{0.12} & \mc{0.08}{0.08} \\
    \noalign{\vspace{2mm}}
   $\theta = \nf{1}{2}$           & \mc{0.51}{0.48} & \mc{0.38}{0.36} & \mc{0.28}{0.27} && \mc{0.51}{0.43} & 
\mc{0.38}{0.31} & \mc{0.28}{0.23} && \mc{0.48}{0.49} & \mc{0.36}{0.36} & \mc{0.26}{0.27} \\
    \noalign{\vspace{2mm}}
   $\theta = 1$                   & \mc{0.96}{0.95} & \mc{0.93}{0.92} & \mc{0.88}{0.86} && \mc{0.96}{0.92} & 
    \mc{0.93}{0.86} & \mc{0.88}{0.78} && \mc{0.95}{0.95} & \mc{0.91}{0.91} & \mc{0.85}{0.86} \\            
    \noalign{\vspace{1.5mm}}    
 \multicolumn{12}{c}{$=\joinrel=\joinrel=\joinrel=$\ \  Clayton\ \ $=\joinrel=\joinrel=\joinrel=$} \\
    \noalign{\vspace{1mm}}
     $\kappa = -\nf{1}{4}$  & \mc{0.68}{0.72} & \mc{0.56}{0.60} & \mc{0.45}{0.47} && \mc{0.68}{0.61} & 
    \mc{0.56}{0.48} & \mc{0.45}{0.37} && \mc{0.65}{0.66} & \mc{0.53}{0.53} & \mc{0.42}{0.42} \\
    \noalign{\vspace{2mm}}
     $\kappa = \nf{1}{4}$           & \mc{0.51}{0.53} & \mc{0.38}{0.40} & \mc{0.28}{0.31} && \mc{0.50}{0.51} & 
    \mc{0.38}{0.38} & \mc{0.28}{0.29} &&  \mc{0.47}{0.47} & \mc{0.35}{0.35} & \mc{0.26}{0.25} \\
    \noalign{\vspace{2mm}}
    $\kappa = \nf{1}{2}$           & \mc{0.90}{0.93} & \mc{0.84}{0.88} & \mc{0.76}{0.81} && \mc{0.90}{0.91} &
   \mc{0.84}{0.86} & \mc{0.76}{0.79} &&  \mc{0.89}{0.89} & \mc{0.82}{0.82} & \mc{0.74}{0.73}          
\end{tabular}

  \medskip  
  (b) Sample size $n=100$
  \vspace*{-0.48cm}
\end{table}

We provide a small simulation study, comparing the power performance of the new tests
based on
${\rm CvM}_{n,4}\gf,~{\rm KS}_{n,4}\gf,~{\rm CvM^*}_{\hspace*{-1.5mm}n,4}\gf,~{\rm KS^*}_{\hspace*{-1.5mm}n,4}\gf$
to that of the Bergsma--Dassios–Yan\-agimoto (BDY) test, see \cite{BergsmaDassios} and \cite{ShiDrtonHan}, 
and that of the ${\rm HBKR}^*$ test, a variant of the Hoeffding-Blum-Kiefer-Rosenblatt (HBKR)
independence test, see~\cite{BlumKieferRosenblatt} and~\cite{Hoeffding}, introduced and discussed 
in~\cite{Baringhaus}. The test statistics of the latter are the linear pattern statistic of degree four, 
\begin{equation*}
  {\rm BDY}_n=n\left (\ \frac{2}{3}\sum_{\sigma \in \mathsf C} T_n(\sigma) - \frac{1}{3}\sum_{\sigma \in \mathsf D} T_n(\sigma)\right ),
\end{equation*}  
with subsets $\mathsf C = \{1234,1243,2134,2143,3412,3421,4312,4321\}$, $\mathsf D := \bS_4\setminus \mathsf C$
of $\bS_4$, and
\begin{equation*}
  {\rm HBKR}_n^*=\frac{1}{n^4}\sum_{j=1}^n \left (m_1(j)m_4(j)\,-\,m_2(j)m_3(j)\right )^2,
\end{equation*}  
where $m_1(j),m_2(j),m_3(j),m_4(j)$ are the number of points $(i,\Pi_n(i))$, $i=1,\dots,n$, 
in the regions
$\{(x,y)\in \bR^2: x<j, y<\Pi_n(j)\}$, $\{(x,y)\in \bR^2: x>j, y<\Pi_n(j)\}$, $\{(x,y)\in \bR^2: x<j, y>\Pi_n(j)\}$,
$\{(x,y)\in \bR^2: x>j, y>\Pi_n(j)\}$.
By design, all these tests are invariant under strictly increasing or
strictly decreasing transformations of the coordinate variables. We mention in passing that
the test statistics can be calculated easily.  
As alternative distributions for the $Z$-variables, FGM copulas with dependence parameter 
$\theta\in (0,1]$, see \eqref{eq:defFGM}, and elements of the 
Clayton copula family given by
\begin{align*}
  C_\kappa^{\rm Clayton}(u,v)\,=\left (\max\left (u^{-\kappa}+v^{-\kappa} - 1,0\right )\right )^{-\nf{1}{\kappa}},\,
  \quad (u,v)\in [0,1]^2,
\end{align*}
with dependence parameter $\kappa\in [-1,\infty)\setminus \{0\}$, are chosen.
In view of the symmetry properties given at the beginning of 
Section~\ref{subsec:FGMtest} it is enough to consider positive values of $\theta$ in the FGM case.

Table \ref{tab:eff} shows the empirical power values for significance level
$\alpha\in \{0.1,0.05,0.025\}$, sample size $n\in \{50,100\}$ 
and dependence parameters $\theta \in \{1,\nf{1}{2},\nf{1}{4}\}$, 
$\kappa\in \{-\nf{1}{4},\nf{1}{4},\nf{1}{2}\}$, obtained from simulations with
10000 replications. All results are rounded to two decimal places.
For the critical values, the empirical upper $\alpha$ quantiles 
of the respective test statistics in the case where the hypothesis is true, separate 
simulation with 100,000 replications have been used. 

From the values in Table \ref{tab:eff} we conclude that the six tests show roughly
the same performance. For ${\rm HBKR}$ and 
BDY this has already been observed earlier in \cite{ShiDrtonHan}, see their Table~2. Of course,
all these empirical findings refer to the specific alternative distributions taken into consideration,
and  to the special case of testing independence. 

\section*{Acknowledgments} We would like to thank the associate editor and the referees 
for their comments, which have led to several improvements of the paper.   

\bibliographystyle{alpha} 
{}

\end{document}